\newtheorem{thm}{Theorem}
\newtheorem{lem}{Lemma}
\newcommand{\extrp}{{\overline{U}^{n-\theta}}}
\title{Linear Galerkin-Legendre spectral scheme for a degenerate nonlinear and nonlocal parabolic equation arising in climatology\footnote{This is the accepted version of the manuscript published in \textit{Applied Numerical Mathematics} \textbf{179} (2022), 105-124 with DOI: \url{https://doi.org/10.1016/j.apnum.2022.04.016}}}
\author{\L ukasz P\l ociniczak\thanks{Faculty of Pure and Applied Mathematics, Wroc{\l}aw University of Science and Technology, Wyb. Wyspia{\'n}skiego 27, 50-370 Wroc{\l}aw, Poland}$\;^,$\footnote{Email: lukasz.plociniczak@pwr.edu.pl}}
\date{}
\begin{document}
\maketitle

\begin{abstract}
A special place in climatology is taken by the so-called conceptual climate models. These relatively simple sets of differential equations can successfully describe single mechanisms of climate. We focus on one family of such models based on the global energy balance. This gives rise to a degenerate nonlocal parabolic nonlinear partial differential equation for the zonally averaged temperature. We construct a fully discrete numerical method that has an optimal spectral accuracy in space and second order in time. Our scheme is based on the Galerkin formulation of the Legendre basis expansion, which is particularly convenient for this setting. By using extrapolation, the numerical scheme is linear even though the original equation is nonlinear. We also test our theoretical results during various numerical simulations that support the aforementioned accuracy of the scheme. \\

\noindent\textbf{Keywords}: spectral method, climate dynamics, degenerate equation, nonlocal operator, fractional integral, parabolic equation\\

\noindent\textbf{AMS Classification}: 35K65, 35K55, 65M70, 86A08
\end{abstract}

\section{Introduction}
In climate dynamics, one distinguishes between various models according to their complexity and the number of physically resolved phenomena. Overall, there is a hierarchy of models in which on the one end we have General Circulation Models (GCMs), and on the other - Conceptual Climate Models \cite{Cru12, Ran07, Sal02, Sto11}. In between, there are models of Intermediate Complexity that frequently utilize coarser grids or a larger number of parametrizations of various phenomena than GCMs \cite{Cau02}. General Circulation Models are the most advanced descriptions of the global flow of the planetary atmosphere or ocean along with many physical quantities such as pressure, temperature, and water vapour to name only a few. Because they are systems of nonlinear partial differential equations to be solved for long times on a sphere, they usually require supercomputers to analyse and resolve. Thanks to that, they are used to gain quantitative insight about the various interactions between different constituents of the Earth system. Moreover, they are crucial in making predictions of different scenarios concerning, for instance, greenhouse gas emissions and their impact on the mean temperature.

On the other side of the spectrum are Conceptual Climate models that usually are low-order dynamical systems. Their role is to describe several mechanisms of climate along with their nonlinear interdependencies rather than simulate the whole Earth system. They are very useful to investigate the steady states of the climate and their bifurcations which might be difficult to resolve in the full GCMs. For example, conceptual climate models may only focus on energy balance to describe the global or zonally (longitudinally) averaged temperature and steady states of the climate that arise from it. The seminal works concerning this concept have been done by Budyko \cite{Bud69}, Sellers \cite{Sel69} for zero-dimensional case (global mean), and North \cite{Nor75, Nor81} for meridional evolution of temperature (zonal mean). Overall, in energy balance models, one usually assumes that the temperature distribution $T=T(x,t)$, where $x$ denotes the sine of latitude, is governed by the incoming solar radiation, outgoing infrared radiation, and some mechanism of horizontal (meridional) energy transport. Furthermore, as was first noted in \cite{Bha82} the reflectivity of the surface may depend on the history of the temperature. This leads to a nonlinear and nonlocal parabolic equation that is the main subject of our investigations
\begin{equation}
\label{eqn:MainEqTemp}
\begin{cases}
	T_t = (d(T)(1-x^2)T_x)_x + g(x, t, T, JT),	& x\in(0,1), \quad t>0, \\
	T_x(0,t) = 0, \quad T_x(1, t) < \infty, & \\
	T(x,s) = T_0(x,s), \quad -\tau \leq s \leq 0, &
\end{cases}
\end{equation}
where the memory operator with a kernel $K$ is defined as
\begin{equation}
\label{eqn:NonlocalOperator}
J T(x,t) = \int_0^\tau K(s) T(x, t-s) ds.
\end{equation}
Here, $d=d(T)$ is the possibly nonlinear diffusivity while $g$ is the nonlinear source term. Notice that the nonlocality enters the equation through the latter and the second-order differential operator is degenerate for $x=1$. A derivation of the above model in the framework of climate dynamics is given in the next section. In early works, the model was analysed from the climatological point of view in \cite{Nor75a} where the natural efficiency of Legendre orthogonal polynomials was noticed. The linear case was solved exactly for several first modes while the nonlinear one, with the diffusivity proportional to the flux as proposed in \cite{Sto73}, and iteratively in \cite{Lin78}. In this work the authors noted that nonlinear effects have a significant impact on the sensitivity of the model to variations in the solar constant. Moreover, due to a nonlinear source, which is a consequence of the ice-albedo feedback discussed below, the equation can have several steady states representing different climates. Their stability, bifurcations, and sensitivity to parameter perturbations are of high importance in climatology. The simple energy balance model has been generalized in several ways by adding additional degrees of freedom. For the globally averaged model, one can mention adding a mass balance to account for ice sheet variations \cite{Kal97, Ghi81, Plo20, Plo20a} or the amount of CO$_2$ in the atmosphere \cite{Fow13,Fow15}. This is especially relevant for understanding the oscillations of ice ages and their rhythmicity. Further information concerning this topic can be found in \cite{Maa90, Cru12, Dit18, Nym19, Qui19, De13}. A modern review of decades of research on energy balance models can be found in a readable book by North and Kim \cite{Nor17}.

There is also a broad literature concerning the mathematical aspects of the above problem. Its existence, uniqueness, and regularity of solutions was investigated by Hetzer in \cite{Het96}. Moreover, an essential problem for applications - parameter estimation - was analysed in \cite{Roq14, Can20}. As was noted above, steady states of the problem can bifurcate what can lead to hysteresis. Mathematical analysis of this phenomenon was given in \cite{Dia06}. Some other interesting results concerning the local version of the model were obtained by Diaz \cite{Dia97} where the general mathematical theory of energy balance models has been developed for a possibly degenerate nonlinear diffusivity. Furthermore, a generalization of the domain from the 2-sphere into a Riemannian manifold without the boundary was analysed in \cite{Ber09} where also a numerical treatment was conducted. Additionally, since in some parametrizations the source can exhibit a discontinuity in the temperature, a free boundary can arise. We give a climatological background of this phenomenon in the next section and the reader can consult \cite{Dia14} for a thorough treatment. Lastly, we also would like to mention broad studies conducted from the dynamical systems point of view in which the horizontal transport is modelled by a relaxation term for the globally averaged temperature \cite{McG12, Wal14, Wid13}.

There are several accounts of the numerical treatment of energy balance models. The early simulations were based on spectral Legendre decomposition in space and the first-order implicit Euler scheme in time \cite{Nor79}. As was also noted in \cite{Nor75} the model is amenable for such a treatment due to the rapid convergence of the orthogonal series. Recently, several papers concerning the finite element method were published. In \cite{Ber09, Ber08} the situation set on a two-dimensional Riemannian manifold was solved in the case of nonlinear diffusivity modelled by a p-Laplacian. On the other hand, in \cite{Hid15} a finite volume WENO method has been applied to solve the problem where the surface of the Earth is split into land and ocean fractions. Similar analysis but with emphasis on equilibrium solutions was given in \cite{Hid20}.

In this paper, we design and analyse a two-step spectral method based on the Galerkin-Legendre approximation that is weighted in time. In particular, our method contains the second order in time Crank-Nicolson scheme. The main motivation of this paper is to present a rigorous convergence treatment of the early ideas of energy balance simulations \cite{Nor79}. Spectral methods are known for their exponential accuracy provided the regularity of the solution. This can significantly reduce the computation time since only a small number of terms in the expansion needs to be calculated. This is especially relevant for dealing with nonlocal operators in time that require all information about the history in each time-stepping iteration. Our approach is based on similar estimates obtained in the finite element setting presented in the classical works of Douglas and Dupont \cite{Dou73}. However, as is seen from (\ref{eqn:MainEqTemp}) our PDE becomes degenerate at $x=1$ which causes several difficulties. We overcome them by introducing a weighted $L^2$ space in which the solution is sought. We also utilize the extrapolation of coefficients in the same way as was done in \cite{Lus79} to obtain a linear method. Galerkin finite element method has also been applied to parabolic equations with nonlocal terms in \cite{Can90}. On the other hand, Galerkin spectral method has recently been applied to nonlinear parabolic problems in \cite{Liu16}. The reader can find state-of-the-art surveys of all variants of spectral methods in \cite{Can07,She11}.

The paper is structured as follows. In the next section, we present a derivation of our main model (\ref{eqn:MainEqTemp}) in the climatological setting. In Section 3, we deal with a semidiscrete scheme where we discretize the spatial variable leaving a continuous dependence on time. The method attains the optimal spectral accuracy. The convergence proofs are given. Furthermore, Section 4 concerns the fully discrete method where a two-step weighted scheme with extrapolated coefficients is constructed and analysed with respect to the convergence. Thanks to the extrapolation, our method is linear despite the fact that the original problem may be highly nonlinear due to the diffusivity. In Section 5, we provide implementation guidelines, solutions that improve calculation performance, and simulations that verify previously proved estimates. We end the paper with a conclusion with some prospects for future work.

In what follows, the generic letter $C$ will be used frequently to denote any positive constant that can depend on the solution and its derivatives but not on the discretization parameters. Moreover, as in the usual practice, the value of $C$ can change even in the same chain of  inequalities. This will not cause any confusion or lack of rigour. 

\section{Background on climate dynamics}
To set the stage for our subsequent reasoning, we review the derivation of the main equations in the setting of climate dynamics. Some more elaborate exposition can be found in \cite{Fow11,Plo20a}. 

We consider a simple conservation of energy written in terms of the zonally and vertically averaged temperature that is symmetric with respect to the equator. Let $T = T(x, t)$ be the mean temperature at time $t$ and latitude $\theta$ where $x = \sin\theta$. We prefer to use $x$ instead of $\theta$ since then, the spherical Laplacian simplifies substantially. The basic model is the following
\begin{equation*}
c T_t = R_i - R_o + H,
\end{equation*}   
where $c$ is the heat capacity of Earth, $R_i$ is the incoming solar radiation that reaches the surface (insolation), $R_o$ is the outgoing infrared radiation, and $H$ represents the horizontal transport. 

We can further parametrize these various constituents. As the amount of heat $Q$ reaches Earth its fraction $\alpha$, called albedo, is reflected by the surface. Since, for example, snow ($\alpha = 0.80$) reflects much more light than the ocean ($\alpha = 0.06$), we have to distinguish between different types of surface and its albedo. This leads to the so-called ice-albedo feedback that is one of the most important mechanisms regulating the climate. Since ice has a small albedo, it causes more radiation to be reflected, lowering the surface temperature. This, in turn, produces favourable conditions for the formation of ice caps. This phenomenon can be parametrized by letting $\alpha$ to depend on $x$ and $T$. In the literature one can find many of these functional relations. For example, Budyko proposed that the albedo has two values: one for ice and one for ice-free surface. The boundary between these, called the ice line, depends on the temperature
\begin{equation}
\label{eqn:AlbedoFreeBoundary}
\alpha(x,t) = 
\begin{cases}
	\alpha_1, & T(x,t) \leq T_i, \\
	\alpha_2, & T(x,t) > T_i, 
\end{cases}
\end{equation}
where $T_i$ is usually taken as $-10^\circ $C. That is, ice starts to form when the mean temperature is below a threshold. This parametrization leads to a free-boundary problem since one has to determine the ice line position $x_i(t)$ satisfying $T(x_i(t),t) = T_i$. This approach has been analysed, for example in \cite{Dia20}. Other forms of albedo have also been proposed. One of the most common ones assumes continuous dependence on both latitude and the temperature. For example, Sellers suggested a piecewise linear relationship
\begin{equation*}
\alpha(T) = 
\begin{cases}
	\alpha_1, & T\leq T_1, \\
	\alpha_1 + (\alpha_2-\alpha_1)\dfrac{T-T_1}{T_2-T_1}, & T_1 < T \leq T_2, \\
	\alpha_2, & T > T_1, 
\end{cases}
\end{equation*} 
for some choices of $T_{1,2}$. In general, albedo is frequently represented as a bounded, monotone increasing function of the temperature and, possibly, a low-order polynomial in space \cite{Kal97, Bha82, Fow13, Plo20a}. We will make such an assumption in the sequel. Moreover, as was suggested in \cite{Bha82} (but also see \cite{Qui18}), the present albedo of the ice-covered ground is determined not only by the actual temperature, but rather by its past values. Whence, we take $\alpha$ to be a function of the nonlocal (memory) operator acting on $T$, that is
\begin{equation*}
R_i = Q S(x, t) (1-\alpha(x,T, JT)),
\end{equation*}
where $S=S(x,t)$ is the distribution of insolation over latitude which takes into account an uneven illumination of the spherical Earth. To a good approximation, one can take $S(x) \approx S_0 + S_1 L_2(x)$, where $L_2$ is the second Legendre polynomial \cite{Nor17}. 

Furthermore, the absorbed heat is isotropically reradiated into space. This can be parametrized by the Stefan-Boltzmann law (as was done by Sellers)
\begin{equation*}
R_o = \sigma T^4,
\end{equation*}
or its linearization $R_o = A + BT$ (as was done by Budyko). Since the change in the temperature is relatively small, this simplification is usually justified. 

Finally, we have the horizontal transport term that arises due to the uneven temperature distribution along the latitude: heat moves from the equator to the polar regions. To the level of complexity that we want to achieve, we assume that the transport can be modelled by a diffusion term \cite{Nor75}
\begin{equation*}
H = \nabla\cdot \left(d(u) \nabla T\right) = \left(d(u) (1-x^2)T_x\right)_x.
\end{equation*}
The other approach, by Budyko, is based on using a relaxation term $\propto (T-\overline{T})$ where $\overline{T}$ is the global mean temperature. The boundary conditions are taken to model the vanishing energy flux on both the equator $x=0$ and poles $x=1$, that is we impose $-D(u)(1-x^2)T_x = 0$ there. As was found in \cite{Nor75a, Nor75} linear diffusion, i.e., $D(u) = $const., leads to an accurate and robust model and thus, it is an important case to consider. However, as was suggested in \cite{Sto73} and further verified in \cite{Lin78}, having the diffusivity being a function of the gradient has a significant effect on the stability of polar ice caps. This parametrization leads to the p-Laplacian operator which, in this case, was analysed in \cite{Dia97}. Putting all obtained terms and renaming the source term, we arrive at (\ref{eqn:MainEqTemp}). 

We note that the heat capacity $c$ is assumed to be constant, however, in \cite{Roq14} it was suggested that it may also be a function of the past values of temperature and latitude. Investigating a model of this type along with gradient dependent diffusivity will be the subject of our future work. 

\section{Spectral discretization with respect to space}

\subsection{Definitions and assumptions}
We begin with some preparations. By $L^2(0,1)$ we denote the usual Hilbert space of square-integrable functions with a norm $\left\|\cdot \right\|$. Similarly, $H^m(0,1)$ where $m\geq 1$ is the Sobolev space of $m-$th weakly differentiable functions with the norm denoted by $\left\|\cdot\right\|_m$. As for the scalar product, we will only use $L^2$ one and write $(\cdot, \cdot)$. We also introduce an intermediate space where the solution to (\ref{eqn:MainEqTemp}) lives 
\begin{equation*}
V = \left\{v \in H^1(0,1): \, \sqrt{1-x^2}\,v_x \in L^2(0,1)\right\},
\end{equation*}
with the norm
\begin{equation}
\label{eqn:VNorm}
\left\| v \right\|_V = \int_0^1 (1-x^2) v_x^2 \, dx + \int_0^1 v^2 dx.
\end{equation}
Due to the degeneracy of the equation (\ref{eqn:MainEqTemp}) at $x=1$, the above weighted $L^2$ space is a natural choice. In addition, because of that reason, the corresponding quadratic form needed for the definition of a weak solution is not coercive (it is only weakly coercive). One of the standard ways of dealing with that problem is to introduce a transformation
\begin{equation*}
u(x,t) = e^{-t} T(x,t),
\end{equation*}
which leads to
\begin{equation}
\label{eqn:MainEq}
\begin{cases}
	u_t + u = (D(u)(1-x^2)u_x)_x + f(x, t, u, Ju),	& x\in(0,1), \quad t\in (0, t_0), \\
	u_x(0,t) = 0, \quad u_x(1, t) < \infty, & \\
	u(x,s) = \psi(x,s), \quad -\tau \leq s \leq 0, &
\end{cases}
\end{equation}
where $f(x, t, u, Ju) = e^{-t} g(x, t, e^{t} u, J (e^t u))$ and $\psi(x,s) = e^{-t} T_0(x,s)$. Notice that we should have written $D(u) = d(e^t u)$, i.e. indicating the explicit dependence on time. However, since $t\in[0,t_0]$ we have $e^t$ bounded. Therefore, omitting it from the diffusivity will not produce any quantitative effects in the proofs below. We thus commit this slight abuse of notation and abstain from writing explicit time dependence. The reader will see every reasoning below can be repeated for time-dependent cases with essentially no changes. 

Now, by multiplying the above by $v\in V$ and integrating by parts from $x = 0$ to $x=1$ we obtain a weak formulation which is the basis for the Galerkin method
\begin{equation}
\label{eqn:WeakForm}
\begin{cases}
	(u_t, v) + a(D(u); u, v) = (f(t, u, Ju), v), \quad v \in V, \\
	u(s) = \psi(s), \quad -\tau \leq s \leq 0,
\end{cases}
\end{equation}
where the quadratic form $a$ linear in the second and third argument is defined by
\begin{equation}
\label{eqn:aForm}
a(D(w); u, v) = \int_0^1 D(w) (1-x^2) u_x v_x dx + \int_0^1 u v \, dx = (D(u)(1-x^2) u_x, v_x) + (u, v).
\end{equation}
We will also write $a(u,v) := a(1; u, v)$ which implies that $a(u,u) = \left\|u\right\|_V^2$. Moreover, from now on, if it does not pose any threat to unambiguity, we will suppress writing the independent variables.

Concerning the assumptions imposed on various parameters, we make the natural choices that are also required for existence and uniqueness (see \cite{Dia97,Roq14,Het96}). Specifically, we assume that the diffusivity $D=D(u)$ and the source $f=f(x,t,u,w)$ are smooth with
\begin{equation}
\label{eqn:Assumptions}
0 < D_- \leq D(u) \leq D_+ < \infty, \quad |D_u| + |f_u| + |f_w| \leq C.
\end{equation}
Moreover, we take the kernel of the nonlocal operator (\ref{eqn:NonlocalOperator}) to have a minimal regularity for well-posedness
\begin{equation*}
K \in L^1(0,t_0).
\end{equation*}
In several places, we will further assume that the solution of (\ref{eqn:MainEq}) is sufficiently smooth, which, in turn, requires more regularity on $D$ and $f$. We also would like to note that considering a discontinuous source case is one of the subjects of our future work. 

\subsection{Numerical scheme}
In the spatial discretization, we use the Galerkin scheme for which the test and trial functions (see \cite{Can07}) belong to the following finite-dimensional space
\begin{equation*}
V_N = \left\{v \in \mathbb{P}_N(0,1): \, v_x(0) = 0, \; v_x(1) < \infty\right\},
\end{equation*}
where $\mathbb{P}_N$ is the polynomial space of degree $N$. As for the orthonormal basis $\left\{\phi_i\right\}_{i=0}^N$ for $V_N$ we choose 
\begin{equation}
\label{eqn:Basis}
\phi_i = \sqrt{4n+1} L_{2i}(x), \quad i=0,1,2,...,
\end{equation} 
where $L_{2i}$ is the Legendre polynomial of degree $2i$. We obviously have $(\phi_i,\phi_j) = \delta_{ij}$. Moreover, this choice is particularly convenient for linear diffusion, i.e., $D(u) = const.$, because it constitutes the eigenbasis for the second-order operator
\begin{equation}
\label{eqn:Eigenfunctions}
L\phi_i := -\left((1-x^2) (\phi_{i,x})\right)_x = \lambda_i \phi_i, \quad \lambda_i = 2i(2i+1).
\end{equation}
This automatically diagonalizes the stiffness matrix and facilitates computations in the important linear case or makes the matrix sparse for weakly nonlinear diffusion. 

By weighting (\ref{eqn:WeakForm}) with respect to $V_N$ we formulate the Legendre-Galerkin numerical scheme. We thus look for $u_N(t)\in V_N$ that for all $t\in[0,t_0]$ satisfies
\begin{equation}
\label{eqn:GalerkinWeakForm}
\begin{cases}
	(u_{N,t}, v) + a(D(u_N); u_N, v) = (f(t, u_N, Ju_N), v), \quad v \in V_N, \\
	u_N(s) = \psi_N(s), \quad -\tau \leq s \leq 0,	
\end{cases}
\end{equation}
where $\psi_N$ is the appropriate approximation to the initial condition. Note also that we have refrained from writing the $x$ variable as arguments since the inner product is taken with respect to it. This slight abuse of notation should not cause any misunderstandings. Henceforth, we will use the orthogonal $L^2$ Legendre projection 
\begin{equation}
\label{eqn:OrthogonalProjection}
\psi_N(s) = P_N \psi(s) := \sum_{i=0}^N (\psi(s),\phi_i) \phi_i, \quad -\tau \leq s \leq 0,	
\end{equation}
which has the following spectral accuracy (for a detailed discussion see \cite{Can07}, formulas (5.4.12) and (5.4.17))
\begin{equation}
\label{eqn:OrthogonalProjectionAccuracy}
\begin{split}
	&\left\|u - P_N u \right\| \leq C N^{-m} \left\|u\right\|_m, \quad \left\|u - P_N u \right\|_l \leq C N^{2l - 1/2 - m} \left\|u\right\|_m, \\
	&\left\|u - P_N u \right\|_\infty \leq C N^{\frac{1}{2}-m} V(u^{(m)}_x), 
\end{split}
\end{equation}
where $V(\cdot)$ denoted the total variation and $l\geq 1$. The error of the approximation with $P_N$ in $V$-norm is better than that in the Sobolev space.
\begin{lem}
\label{lem:VNormPEstimate}
Let $u(t)\in H^{2m}(0,1)$ for $m\geq 1$ and each $t\in[0,t_0]$ with $u$ and $u_t$ bounded. Then, for sufficiently large $N$ we have
\begin{equation}
	\label{eqn:VNormPEstimate}
	\left\|u-P_N u\right\|_V \leq C N^{1-2m} \left\|L^m u\right\| \leq C N^{1-2m} \left\|u\right\|_{2m},
\end{equation}
where $L$ is defined in (\ref{eqn:Eigenfunctions}). 
\end{lem}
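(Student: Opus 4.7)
The plan is to exploit the fact that $\{\phi_i\}$ diagonalizes the operator $L$ and to read off the $V$-norm of the projection error via a Parseval-type identity. First I would expand $u(\cdot,t) = \sum_{i=0}^\infty \hat u_i\,\phi_i$ with $\hat u_i = (u,\phi_i)$, so that $w := u - P_N u = \sum_{i>N} \hat u_i\, \phi_i$.

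Next I rewrite the weighted seminorm by integration by parts. Because each $\phi_i$ is (the restriction of) an even polynomial, $\phi_{i,x}(0)=0$, and the factor $(1-x^2)$ kills any contribution at $x=1$. Hence the boundary terms vanish and
\[
\bigl((1-x^2)\,w_x,\, w_x\bigr) \;=\; (Lw,\,w),
\]
so that, using $L\phi_i=\lambda_i\phi_i$ and the orthonormality of the $\phi_i$,
\[
\|w\|_V^{\,2} \;=\; (Lw,w) + (w,w) \;=\; \sum_{i>N} \hat u_i^{\,2}\,(1+\lambda_i).
\]
From the same eigenfunction relation one gets $L^m u = \sum_i \hat u_i\,\lambda_i^{m}\,\phi_i$, hence $\|L^m u\|^2 = \sum_{i\geq 0} \hat u_i^{\,2}\,\lambda_i^{2m}$.

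The remaining step is purely algebraic. Since $\lambda_i = 2i(2i+1)$ is of order $i^2$ and $m\geq 1$, for every $i\geq N+1$ one has $1+\lambda_i \leq 2\lambda_i$ and $\lambda_i^{\,1-2m}\leq \lambda_{N+1}^{\,1-2m}\leq C\,N^{2(1-2m)}$. Multiplying and summing yields
\[
\|w\|_V^{\,2} \;\leq\; C\,N^{2(1-2m)} \sum_{i>N} \hat u_i^{\,2}\, \lambda_i^{2m} \;\leq\; C\,N^{2(1-2m)}\, \|L^m u\|^2,
\]
which is the first inequality of \eqref{eqn:VNormPEstimate} after taking a square root. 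The second inequality $\|L^m u\|\leq C\|u\|_{2m}$ follows because $L=-\partial_x((1-x^2)\partial_x)$ is a second-order differential operator with smooth coefficients on $[0,1]$, so $L^m$ is a differential operator of order $2m$ and maps $H^{2m}(0,1)$ continuously into $L^2(0,1)$.

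The main obstacle I anticipate is making the spectral expansion and the integration by parts fully rigorous for a general $u\in H^{2m}$: one must ensure that the Legendre series converges in $V$ strongly enough to permit term-by-term manipulation, and that $u$ has enough compatibility at the degenerate endpoint $x=1$ for the iterated application of $L$ to produce no surviving boundary residuals. Once these standard but slightly delicate facts are in place, the rest of the argument is just bookkeeping on the spectrum $\lambda_i\asymp i^2$.
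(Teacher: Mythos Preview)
Your proof is correct and follows essentially the same route as the paper: both integrate by parts to rewrite the weighted seminorm as $(Lw,w)$, expand in the eigenbasis $\{\phi_i\}$, and extract the factor $\lambda_{N+1}^{1-2m}\asymp N^{2-4m}$ from the tail. The only cosmetic difference is that the paper handles the $L^2$ part of the $V$-norm separately via the cited projection estimate, whereas you absorb it directly through $1+\lambda_i\le 2\lambda_i$; your concern about boundary residuals when iterating $L$ is exactly the step the paper leaves implicit when writing $(u,L^m\phi_i)=(L^m u,\phi_i)$.
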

\begin{proof}
We start by writing
\begin{equation*}
	((1-x^2) (u-P_N u)_x, (u-P_N u)_x) = \int_0^1 (u-P_N u) L(u-P_N u) dx,
\end{equation*}
which follows from the integration by parts. Since 
\begin{equation*}
	u - P_N u = \sum_{i=N+1}^\infty (u, \phi_i) \phi_i,
\end{equation*}
we obtain
\begin{equation*}
	((1-x^2) (u-P_N u)_x, (u-P_N u)_x) = \sum_{i,j=N+1}^\infty (u, \phi_i) (u, \phi_j) \int_0^1 \phi_i L \phi_j dx.
\end{equation*}
Moreover, since $\phi_i$ is the eigenfunction of $L$ (see (\ref{eqn:Eigenfunctions})) we have
\begin{equation*}
	((1-x^2) (u-P_N u)_x, (u-P_N u)_x) = \sum_{i=N+1}^\infty \lambda_i |(u, \phi_i)|^2.
\end{equation*}
However, we can also write $\phi_i = \lambda_i^{-1} L\phi_i$ which after iteration implies that
\begin{equation*}
	((1-x^2) (u-P_N u)_x, (u-P_N u)_x) = \sum_{i=N+1}^\infty \lambda^{1-2m}_i |(u, L^m\phi_i)|^2 \leq \lambda_{N+1}^{1-2m} \left\|L^m u \right\|^2,
\end{equation*}
where the last inequality follows from Plancherel's identity. Further, since $\lambda_{N+1} = (2N+2)(2N+3)$ we can write
\begin{equation*}
	((1-x^2) (u-P_N u)_x, (u-P_N u)_x) \leq C N^{2-4m} \left\|L^m u \right\|^2.
\end{equation*} 
Now, by the $L^2$ error estimates for $P_N$ given in (\ref{eqn:OrthogonalProjectionAccuracy}) we have
\begin{equation*}
	((1-x^2) (u-P_N u)_x, (u-P_N u)_x) + (u-P_N u, u-P_N u) \leq C \left( N^{2-4m} \left\|L^m u \right\|^2 + N^{-4m} \left\|u\right\|_{2m}^2\right),
\end{equation*} 
which for sufficiently large $N$ implies the sought error estimate in the $V$-norm.
\end{proof}

We note that the above is not the only optimal choice of a projection that we can use. For the main equation (\ref{eqn:GalerkinWeakForm}) it is much more convenient to use the following elliptic (or Ritz) orthogonal projection
\begin{equation}
\label{eqn:RitzProjection}
a(D(u); R_N u - u, v) = 0, \quad v \in V_N,
\end{equation}
which utility in obtaining an optimal order of convergence was discovered in the early days of mathematical finite element analysis \cite{Tho07}. 

Before we proceed the convergence result for (\ref{eqn:GalerkinWeakForm}) we state several auxiliary lemmas concerning Ritz projection (\ref{eqn:RitzProjection}). First, we show that it has the optimal order of approximation both in $V$ and in $L^2(0,1)$. 
\begin{lem}
\label{lem:RitzProjection}
Let $u(t)\in H^{2m}(0,1)$ for $m\geq 1$ and each $t\in[0,t_0]$ with $u$ and $u_t$ bounded. Then, for sufficiently large $N$ we have the following estimate in $V$ and $L^2$
\begin{equation}
	\label{eqn:RitzEstimate}
	\left\|u - R_N u\right\|+N^{-1}\left\|u - R_N u\right\|_V \leq C N^{-2m}\left\|u\right\|_{2m}.
\end{equation}
Moreover, if additionally $u_t\in H^{2m}(0,1)$ then the time derivatives of the errors satisfy
\begin{equation}
	\label{eqn:RitzDerivativeEstimate}
	\left\|(u - R_N u)_t\right\|+N^{-1}\left\|(u - R_N u)_t\right\|_V \leq C N^{-2m}\left(\left\|u\right\|_{2m}+\left\|u_t\right\|_{2m}\right).
\end{equation}
\end{lem}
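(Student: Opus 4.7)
The plan is to follow the classical Céa--Aubin--Nitsche framework, adapted to the weighted space $V$ and the $t$-dependent bilinear form $a(D(u);\cdot,\cdot)$. For the $V$-norm half of (\ref{eqn:RitzEstimate}), observe that (\ref{eqn:Assumptions}) makes $a(D(u);\cdot,\cdot)$ continuous and coercive on $V$ with constants depending only on $D_\pm$. Combined with the Galerkin orthogonality built into (\ref{eqn:RitzProjection}), Céa's lemma gives
\[
\|u - R_N u\|_V \le C \inf_{\chi\in V_N}\|u-\chi\|_V \le C\|u - P_N u\|_V \le C N^{1-2m}\|u\|_{2m},
\]
the last inequality being Lemma~\ref{lem:VNormPEstimate} applied with $\chi = P_N u$. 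This already yields the $N^{-1}\|u-R_N u\|_V$ contribution.

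For the $L^2$ half I would use an Aubin--Nitsche duality argument. Setting $e := u - R_N u$, solve the auxiliary problem
\[
a(D(u); w, v) = (e, v) \qquad \forall\, v \in V,
\]
which is uniquely solvable by Lax--Milgram. Exploiting the Legendre eigenstructure (\ref{eqn:Eigenfunctions}) together with the bounds on $D$, one obtains the regularity estimate $\|Lw\| + \|w\| \le C\|e\|$. Using Ritz orthogonality and symmetry of $a$ in its last two arguments,
\[
\|e\|^2 = a(D(u); w, e) = a(D(u); w - R_N w, e) \le C\|w - R_N w\|_V\,\|e\|_V,
\]
and applying the $V$-norm bound already proved, now to $w$ with $m=1$, produces $\|w - R_N w\|_V \le C N^{-1}\|Lw\| \le C N^{-1}\|e\|$; hence $\|e\| \le C N^{-1}\|e\|_V \le C N^{-2m}\|u\|_{2m}$ as required.

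For the time-derivative estimate (\ref{eqn:RitzDerivativeEstimate}) I would differentiate the defining identity $a(D(u); R_N u - u, v) = 0$ with respect to $t$. Since only the coefficient $D(u)$ carries the time dependence, this produces the perturbed orthogonality
\[
a(D(u); (R_N u - u)_t, v) = -\bigl(D_u(u)\,u_t\,(1-x^2)(R_N u - u)_x,\, v_x\bigr), \qquad v \in V_N.
\]
The natural decomposition $(u - R_N u)_t = (u_t - R_N u_t) - \phi$ with $\phi := R_N u_t - (R_N u)_t \in V_N$ reduces matters to controlling $\phi$, since the first piece is handled by applying (\ref{eqn:RitzEstimate}) with $u_t$ in place of $u$. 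Testing the $\phi$-equation against $\phi$ itself and using (\ref{eqn:Assumptions}) gives $\|\phi\|_V \le C\|e\|_V \le CN^{1-2m}\|u\|_{2m}$; a second Aubin--Nitsche pass, legitimate because $\phi\in V_N$ so Ritz orthogonality kills the best-approximation error in the dual solution, recovers the missing $N^{-1}$ for the $L^2$ bound on $\phi$.

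The main obstacle I anticipate is justifying the elliptic regularity $\|Lw\| \le C\|e\|$ underlying the duality step: the operator degenerates at $x=1$, so classical $H^2$-regularity is unavailable and the bound must be extracted from the spectral representation, which becomes delicate when the weight $D(u)$ is itself a function of the solution. A secondary technical point is verifying that the integration-by-parts manipulations behind the $\phi$ estimate produce no boundary contributions, which relies on the Neumann condition at $x=0$ and on the vanishing factor $(1-x^2)$ at $x=1$ under the stated smoothness of $u$ and $u_t$.
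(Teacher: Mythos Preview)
Your treatment of (\ref{eqn:RitzEstimate}) is essentially the paper's own: C\'ea's lemma for the $V$-estimate (the paper also takes $\chi=P_Nu$ and invokes Lemma~\ref{lem:VNormPEstimate}), followed by an Aubin--Nitsche duality for the $L^2$-estimate. The only cosmetic difference is that you subtract $R_Nw$ from the dual solution while the paper subtracts $P_Nw$; either choice works. Regarding your main worry, the paper obtains $\|Lw\|\le C\|e\|$ directly: from $a(D(u);w,v)=(e,v)$ one reintegrates by parts, uses that $D$ is bounded above and below and $D_u,u_x$ are bounded, and combines with the stability bound $\|w\|_V\le C\|e\|$. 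No $H^2$-regularity is invoked; everything goes through the operator $L$ and its spectral representation, so the degeneracy at $x=1$ causes no difficulty.

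For (\ref{eqn:RitzDerivativeEstimate}) your route genuinely differs from the paper's. You split $(u-R_Nu)_t=(u_t-R_Nu_t)+\phi$ with $\phi=R_Nu_t-(R_Nu)_t\in V_N$ (note the sign: it should be $+\phi$, not $-\phi$) and estimate the two pieces separately, applying (\ref{eqn:RitzEstimate}) to $u_t$ for the first and a $\phi$-equation plus duality for the second. The paper instead works with $(u-R_Nu)_t$ as a whole: it differentiates (\ref{eqn:RitzProjection}) to get $a(D(u);(R_Nu-u)_t,v)=-a(D(u)_t;R_Nu-u,v)$, tests against $(R_Nu-u)_t$, inserts $v=P_Nu_t$, and peels off the $V$-bound; then a second duality on $(u-R_Nu)_t$ directly gives the $L^2$-bound, with the extra term $a(D(u)_t;w,u-R_Nu)$ handled by one more integration by parts so that only $\|Lw\|$ and $\|u-R_Nu\|$ appear. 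Your decomposition is the Wheeler-style alternative and is equally legitimate; it has the advantage that the first piece is an immediate corollary of the static estimate. However, your duality pass on $\phi$ is under-specified: after writing $\|\phi\|^2=a(D(u);R_Nw,\phi)=a(D(u)_t;\,R_Nu-u,\,R_Nw)$ you still face a term like $a(D(u)_t;\,R_Nu-u,\,w)$ that is only $O(\|e\|_V\|w\|_V)=O(N^{1-2m}\|\phi\|)$ unless you integrate by parts once more to trade $\|e\|_V$ for $\|e\|$ at the cost of $\|Lw\|$---exactly the manoeuvre the paper performs. With that step added, both arguments close in the same way.
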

\begin{proof}
By (\ref{eqn:aForm}) and since $R_N u \in V_N$ we have
\begin{equation*}
	D_- \left\|u - R_N u\right\|_V^2 \leq a(D(u); u-R_N u, u-R_N u) = a(D(u); u-R_N u, u-v) + a(D(u); u-R_N u, v - R_N u),
\end{equation*}
where $v\in V_N$. By the orthogonality of the Ritz projection (\ref{eqn:RitzProjection}) the last term above vanishes leaving
\begin{equation*}
	D_- \left\|u - R_N u\right\|_V^2 \leq D_+ a(u - R_N u, u - v), \quad v \in V_N. 
\end{equation*}
By choosing $v = P_N u$ and using Cauchy-Schwarz inequality, we immediately have
\begin{equation*}
	\left\|u - R_N u\right\|_V^2 \leq \frac{D_+}{D_-} \left\|u - R_N u\right\|_V \left\|u - P_N u\right\|_V.
\end{equation*}
Furthermore, the application of Lemma \ref{lem:VNormPEstimate} leads to
\begin{equation}
	\label{eqn:RitzEstimateV}
	\left\|u - R_N u\right\|_V \leq \frac{D_+}{D_-} \left\|u - P_N u\right\|_V \leq C N^{1-2m} \left\|u\right\|_m,
\end{equation}
which is the first assertion. 

To show the $L^2$ estimate of the error, we follow the duality argument (see \cite{Tho07}). For a fixed $u \in H^{2m}(0,1)$ let $w$ be the solution of
\begin{equation}
	\label{eqn:Duality}
	a(D(u); w, v) = (u-R_N u, v), \quad v \in V.
\end{equation}
By putting $v = w$ we immediately obtain the stability estimate $\left\|w\right\|_V \leq C \left\|u-R_N u\right\|$. Using the definition of $a$ as in (\ref{eqn:aForm}) and reintegrating by parts, we can obtain
\begin{equation}
	\label{eqn:LEstimate}
	\left\|Lw \right\| \leq C \left\|u-R_N u\right\| + \left\|w\right\| \leq C \left\|u-R_N u\right\| + \left\|w\right\|_V \leq C \left\|u-R_N u\right\|,
\end{equation}
where the assumption of bounded $D$ has been used in the first inequality while the stability estimate in the last. Now, by choosing $v = u - R_N u$ in (\ref{eqn:Duality}) and using the definition of the Ritz projection (\ref{eqn:RitzProjection}) we have
\begin{equation*}
	\begin{split}
		\left\|u-R_N u\right\|^2 
		&= (u-R_N u, u-R_N u) = a(D(u); w, u-R_N u) \\
		&= a(D(u); w - P_N w, u - R_N u) + a(D(u); P_N w, u - R_N u) \\
		&= a(D(u); w - P_N w, u - R_N u).
	\end{split}
\end{equation*}
Furthermore, we can use Cauchy-Schwarz inequality along with (\ref{eqn:RitzEstimateV}) and (\ref{eqn:VNormPEstimate}) to infer that
\begin{equation*}
	\left\|u-R_N u\right\|^2 \leq D_+ \left\|w - P_N w\right\|_V \left\|u - R_N u\right\|_V \leq C N^{1-2} \left\|Lw\right\| N^{1-2m} \left\|u\right\|_{2m} .
\end{equation*}
The proof of (\ref{eqn:RitzEstimate}) is finished after utilizing (\ref{eqn:LEstimate}).

The reasoning used in showing the time differentiated version (\ref{eqn:RitzDerivativeEstimate}) is similar and based on a derivative of the Ritz projection definition (\ref{eqn:RitzProjection})
\begin{equation}
	\label{eqn:RitzProjectionDerivative}
	a(D(u); (R_N u - u)_t, v) + a(D(u)_t; R_N u - u, v) = 0, \quad v \in V_N.
\end{equation}
From here it follows that
\begin{equation*}
	\begin{split}
		D_- \left\|(R_N u - u)_t\right\|_V^2 
		&\leq a(D(u); (R_N u - u)_t, (R_N u - u)_t) \\
		&= a(D(u); (R_N u)_t - v, (R_N u - u)_t) + a(D(u); v - u_t, (R_N u - u)_t) \\
		&= a(D(u)_t; v-(R_N u)_t, R_N u - u) + a(D(u); v - u_t, (R_N u - u)_t).
	\end{split}
\end{equation*}
Now, Cauchy-Schwarz inequality yields
\begin{equation*}
	D_-\left\|(R_N u - u)_t\right\|_V^2 \leq C \left(\left\|v-(R_N u)_t\right\|_V \left\|R_N u - u\right\|_V + \left\|v-u_t\right\|_V\left\|(R_N u - u)_t\right\|_V\right).
\end{equation*}
We can make the norm of differences sufficiently small by an orthogonal approximation, i.e., by choosing $v = P_N u_t$. Whence,
\begin{equation*}
	D_-\left\|(R_N u - u)_t\right\|_V^2 \leq C \left(\left\|(P_N u - R_N u)_t\right\|_V \left\|R_N u - u\right\|_V + \left\|(P_N u-u)_t\right\|_V\left\|(R_N u - u)_t\right\|_V\right),
\end{equation*}
and by another estimate
\begin{equation*}
	\left\|(P_N u - R_N u)_t\right\|_V \leq \left\|(P_N u - u)_t\right\|_V + \left\|(u - R_N u)_t\right\|_V,
\end{equation*}
we can write
\begin{equation*}
	\begin{split}
		D_-\left\|(R_N u - u)_t\right\|_V^2 
		&\leq C \left(\left\|(P_N u - u)_t\right\|_V \left\|R_N u - u\right\|_V + \left\|(u - R_N u)_t\right\|_V \left\|R_N u - u\right\|_V \right. \\ 
		&\left.+ \left\|(P_N u-u)_t\right\|_V\left\|(u - R_N u)_t\right\|_V\right).
	\end{split}
\end{equation*}
We can now use the $\epsilon$-Cauchy inequality, that is
\begin{equation*}
	a b \leq \frac{\epsilon}{2}a^2 + \frac{1}{2\epsilon} b^2, \quad a,b\in\mathbb{R}, \quad \epsilon > 0,
\end{equation*}
to transform our estimate to
\begin{equation*}
	\begin{split}
		D_-\left\|(R_N u - u)_t\right\|_V^2 \leq \frac{D_-}{2} \left\|(R_N u - u)_t\right\|_V^2 + C \left(\left\|(P_N u - u)_t\right\|_V + \left\|u - R_N u\right\|_V\right)^2,
	\end{split}
\end{equation*}
that is
\begin{equation*}
	\left\|(R_N u - u)_t\right\|_V \leq C \left(\left\|(P_N u - u)_t\right\|_V + \left\|u - R_N u\right\|_V\right).
\end{equation*}
Hence, the $V$-norm part of (\ref{eqn:RitzDerivativeEstimate}) follows from (\ref{eqn:VNormPEstimate}) and (\ref{eqn:RitzEstimate}). 

The $L^2$ estimate again follows from the duality argument. Let $w$ be as before in (\ref{eqn:Duality}) but $(u - R_N u)_t$ on the right-hand side. Reasoning similarly as before we have with $v\in V$
\begin{equation*}
	\begin{split}
		\left\|(u - R_N u)_t\right\|^2 
		&= a(D(u); w, (u - R_N u)_t) \\
		&= a(D(u); w-v, (u - R_N u)_t) + a(D(u); v, (u - R_N u)_t) \\
		&= a(D(u); w-v, (u - R_N u)_t) - a(D(u)_t; v, u - R_N u) \\
		&= a(D(u); w-v, (u - R_N u)_t) + a(D(u)_t; w-v, u - R_N u) - a(D(u)_t; w, u - R_N u).
	\end{split}
\end{equation*}
where in the third equality we have moved the derivative according to (\ref{eqn:RitzProjectionDerivative}) while in the last we have introduced $w$. Thanks to that, with $v = P_N w$ we can bound the above by previously obtained estimates in the $V$-norm
\begin{equation*}
	\begin{split}
		\left\|(u - R_N u)_t\right\|^2 
		&= C \left(\left\|w-P_N w\right\|_V \left(\left\|(u - R_N u)_t\right\|_V+\left\|u - R_N u\right\|_V\right) + |a(w-P_N w, u - R_N u)|\right).
	\end{split}	
\end{equation*}
The first two terms can be tackled exactly in the same way as above, and the third term can be integrated by parts to obtain
\begin{equation*}
	\begin{split}
		|a(w-P_N w, u - R_N u)| 
		&\leq \int_0^1 |Lw| |u - R_N u| dx \leq \left\|Lw \right\| \left\|u - R_Nu\right\| \\
		&\leq C \left\|u - R_Nu \right\| \left\|u - R_Nu\right\|,
	\end{split}
\end{equation*}
by stability estimate for elliptic equation (\ref{eqn:LEstimate}). Finally, combining the two above estimates with (\ref{eqn:RitzEstimate}) yields (\ref{eqn:RitzProjectionDerivative}) and finishes the proof. 
\end{proof}
As we have seen, the elliptic projection has the same order of accuracy as the standard projection $P_N$. Note also that the error in the $V$-norm is bounded by $N^{1-2m}$ for sufficiently regular functions $u$. This norm involves the first derivative and is similar to the standard $H^1$ norm with $\left\|u\right\|_V \leq \left\|u\right\|_1$. This can be compared with the classical result concerning the approximation in $H^1$ which states that in that case the error is proportional to $N^{3/2-2m}$ which is larger than the former estimate. This again shows that choosing $V$ as the trial space is very natural and optimal.

The Ritz projection is also bounded in the maximum norm which is shown below.
\begin{lem}\label{lem:RitzGradient}
Let $R_N$ be defined as in (\ref{eqn:RitzProjection}) and $u\in H^4(0,1)$. Then, for sufficiently large $N$ we have
\begin{equation*}
	\left\|\left(R_N u\right)_x\right\|_\infty \leq  \left\|u_x\right\|_\infty.
\end{equation*} 
\end{lem}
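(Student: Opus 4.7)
The plan is to compare $R_N u$ with $u$ via the intermediate orthogonal projection $P_N u$, writing
\[
\|(R_N u)_x\|_\infty \leq \|u_x\|_\infty + \|(P_N u - u)_x\|_\infty + \|(R_N u - P_N u)_x\|_\infty,
\]
and to show that the last two terms vanish as $N\to\infty$, so that for $N$ sufficiently large they are absorbed into the first term (i.e., the statement holds up to the usual generic constant convention introduced at the end of the introduction).

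For the first perturbation I would exploit the one-dimensional Sobolev embedding $H^2(0,1)\hookrightarrow C^1([0,1])$, which gives $\|(P_N u - u)_x\|_\infty \leq C\|u-P_N u\|_2$. Applying the Sobolev-level spectral estimate (\ref{eqn:OrthogonalProjectionAccuracy}) with $l=2$ and $m=4$ (allowed since $u\in H^4(0,1)$) produces
\[
\|(P_N u - u)_x\|_\infty \leq C N^{2\cdot 2 - 1/2 - 4}\|u\|_4 = C N^{-1/2}\|u\|_4,
\]
which tends to $0$.

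For the second perturbation the crucial point is that $R_N u - P_N u\in V_N\subset \mathbb{P}_N$, so I can invoke polynomial inverse inequalities on $[0,1]$: a Markov--Bernstein inequality $\|v_x\|\leq C N^2\|v\|$ followed by a Nikolskii inequality $\|w\|_\infty\leq C N^{1/2}\|w\|$ for $w\in\mathbb{P}_N$ give
\[
\|(R_N u - P_N u)_x\|_\infty \leq C N^{5/2}\|R_N u - P_N u\|.
\]
The $L^2$ norm on the right I would bound by the triangle inequality through $u$, using the $L^2$ estimate of (\ref{eqn:OrthogonalProjectionAccuracy}) for $P_N$ and the $L^2$ part of Lemma \ref{lem:RitzProjection} for $R_N$, both at the order $N^{-4}\|u\|_4$. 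This yields $\|(R_N u - P_N u)_x\|_\infty \leq C N^{-3/2}\|u\|_4$, again vanishing.

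Summing the two contributions gives $\|(R_N u - u)_x\|_\infty \leq C N^{-1/2}\|u\|_4$, and since this is $o(1)$ as $N\to\infty$ the triangle inequality yields the stated bound for $N$ large (with the remainder silently absorbed into the generic constant convention of the paper). The main delicate step I anticipate is the \emph{inverse inequality} calibration: one must be careful that the Markov/Bernstein and Nikolskii constants used here are consistent with the interval $[0,1]$ and with the restriction to the even-degree Legendre subspace defining $V_N$, so that the $N^{5/2}$ blow-up is truly beaten by the $N^{-4}$ approximation rates of both $P_N$ and $R_N$. Because the stated inequality has no explicit constant on the right, one finally uses that either $u_x\not\equiv 0$ (so the $o(1)$ term is eventually dominated by $\|u_x\|_\infty$) or else $u$ is constant and both sides are zero.
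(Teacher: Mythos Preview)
Your approach is correct and is essentially the paper's own strategy: a triangle inequality to isolate $\|u_x\|_\infty$, polynomial inverse (Markov/Nikolskii) inequalities, and the $L^2$ approximation rate of the Ritz projection from Lemma~\ref{lem:RitzProjection}. The paper's proof is shorter---it writes $\|(R_N u)_x\|_\infty \leq \|(R_N u - u)_x\|_\infty + \|u_x\|_\infty$ and then applies the chain $\|v_x\|_\infty \leq C N^2\|v\|_\infty \leq C N^3\|v\|$ directly together with $\|R_N u - u\| \leq C N^{-4}\|u\|_4$ to obtain a $C N^{-1}$ remainder---but as written that step invokes a polynomial inverse inequality on $R_N u - u$, which is not in $\mathbb{P}_N$. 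Your insertion of the intermediate $P_N u$, so that the inverse inequality is applied only to the genuine polynomial $R_N u - P_N u$ while $(P_N u - u)_x$ is handled by Sobolev embedding and the $H^2$-estimate in~(\ref{eqn:OrthogonalProjectionAccuracy}), is exactly the refinement that makes the argument airtight. Your closing remark about the missing constant on the right-hand side is also apt: the paper's own proof reaches the same $\|u_x\|_\infty + o(1)$ endpoint, and in the sequel the lemma is used only for uniform-in-$N$ boundedness of $(R_N u)_x$, which both arguments deliver.
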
   
\begin{proof}
We will use the following polynomial inverse inequalities, one for differentiation (Markov inequality, \cite{Tim14}, p. 218), and the others for summability (see \cite{Can07}, formulas (5.4.3) and (5.4.5))
\begin{equation*}
	\left\|v_x\right\|_\infty \leq C N^2 \left\|v\right\|_\infty \leq C N^3 \left\|v\right\|, \quad v\in \mathbb{P}_N. 
\end{equation*}
Now, we can write
\begin{equation*}
	\left\|(R_N u)_x\right\|_\infty \leq \left\|(R_N u - u)_x\right\|_\infty + \left\|u_x\right\|_\infty,
\end{equation*}
and use the above inverse inequality and the error estimate (\ref{eqn:RitzEstimate})
\begin{equation*}
	\left\|(R_N u)_x\right\|_\infty \leq C N^3\left\|R_N u - u\right\|+ \left\|u_x\right\|_\infty \leq C N^{3-4} \left\|u\right\|_4 + \left\|u_x\right\|_\infty,
\end{equation*} 
from which the conclusion follows for sufficiently large $N$.
\end{proof}

The last auxiliary result is the classical Gr\"onwall-Bellman's lemma that we state without the proof (which can be found in \cite{Lin85}).
\begin{lem}[Gr\"onwall-Bellman]
\label{lem:GB}
Let $F(t)$ and $y(t)$ be continuous, non-decreasing, and nonnegative functions. Then
\begin{equation*}
	y(t) \leq F(t) + C \int_0^t y(s) ds,
\end{equation*}
implies
\begin{equation*}
	y(t) \leq F(t) e^{C t}.
\end{equation*}
\end{lem}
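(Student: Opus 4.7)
The plan is to reduce the integral inequality to a differential one via the standard integrating factor trick. First, I would introduce the auxiliary function $Y(t)=\int_0^t y(s)\,ds$, which is continuously differentiable with $Y(0)=0$ and $Y'(t)=y(t)$ by continuity of $y$. The hypothesis then rewrites as $Y'(t)\le F(t)+C\,Y(t)$, i.e.\ $Y'(t)-C\,Y(t)\le F(t)$, so that multiplying through by the integrating factor $e^{-Ct}$ yields the pointwise inequality
\begin{equation*}
\bigl(e^{-Ct}Y(t)\bigr)'\le F(t)\,e^{-Ct}.
\end{equation*}

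Next, I would integrate this from $0$ to $t$, using $Y(0)=0$, to obtain
\begin{equation*}
e^{-Ct}Y(t)\le \int_0^t F(s)\,e^{-Cs}\,ds.
\end{equation*}
At this point the hypothesis that $F$ is non-decreasing enters in a crucial way: since $F(s)\le F(t)$ for $s\in[0,t]$, the right-hand side is bounded by $F(t)\int_0^t e^{-Cs}\,ds=F(t)\,(1-e^{-Ct})/C$. Multiplying both sides by $e^{Ct}$ gives $C\,Y(t)\le F(t)\bigl(e^{Ct}-1\bigr)$, and plugging this back into the original hypothesis $y(t)\le F(t)+C\,Y(t)$ produces the desired bound
\begin{equation*}
y(t)\le F(t)+F(t)\bigl(e^{Ct}-1\bigr)=F(t)\,e^{Ct}.
\end{equation*}

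There is no real obstacle in this argument; the only subtlety is that the clean form of the conclusion (with $F(t)$ sitting outside the exponential rather than an integral of $F$ against $e^{C(t-s)}$) depends on the monotonicity assumption on $F$, which lets one pull $F(t)$ out of the integral. If $F$ were merely continuous and nonnegative, the same argument would give the more general Duhamel-type bound $y(t)\le F(t)+C\int_0^t F(s)\,e^{C(t-s)}\,ds$, which reduces to the stated form precisely under the monotonicity hypothesis.
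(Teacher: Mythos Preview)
Your argument is correct and is the standard integrating-factor proof of the Gr\"onwall--Bellman inequality; every step is valid under the stated hypotheses, and you correctly identify that the monotonicity of $F$ is exactly what allows one to pull $F(t)$ outside the integral and collapse the Duhamel-type bound to the clean form $F(t)e^{Ct}$. Note, however, that the paper does not actually prove this lemma: it is stated without proof, with a reference to \cite{Lin85} for the argument. So there is no ``paper's own proof'' to compare against here; your write-up simply supplies the omitted classical argument.
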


We are ready to prove the main results of this section concerning the semidiscrete numerical scheme. First, we show that the method is stable in time.
\begin{thm}[Stability]
Assume that $f = f(x,t,u,w)$ is bounded with respect to $u$, and $w$, that is
\begin{equation*}
	\left\|f(x,t,u,w)\right\|_\infty \leq g(x,t).
\end{equation*}
where $g=g(x,t)$ is smooth. Then, if $u_N$ is the solution of (\ref{eqn:GalerkinWeakForm}), we have
\begin{equation*}
	\left\|u_N(t) \right\| \leq e^{-D_- (\lambda_1-1) t}\left\|\psi_N(0)\right\| + \int_0^1 e^{-D_- (\lambda_1-1) (t-s)}\left\|g(s)\right\| ds,
\end{equation*}
where $D_-$ is given in (\ref{eqn:Assumptions}) while $\lambda_1$ is defined in (\ref{eqn:Eigenfunctions}). 
\end{thm}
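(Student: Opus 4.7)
The approach is the classical energy-method argument: test the weak form (\ref{eqn:GalerkinWeakForm}) with $v=u_N(t)\in V_N$ itself, extract a scalar differential inequality for $\|u_N(t)\|$, and integrate it by an integrating factor.

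First, setting $v=u_N$ and using $(u_{N,t},u_N)=\tfrac{1}{2}\tfrac{d}{dt}\|u_N\|^2$ yields the energy identity
\[
\tfrac{1}{2}\tfrac{d}{dt}\|u_N\|^2 + a(D(u_N);u_N,u_N) = (f(t,u_N,Ju_N),u_N).
\]
Next I would bound the bilinear form from below. Expanding $u_N=\sum_{i=0}^{N}c_i(t)\phi_i$ in the orthonormal Legendre basis (\ref{eqn:Basis}) and invoking the ellipticity $D(u_N)\geq D_-$ from (\ref{eqn:Assumptions}) together with the eigenrelation (\ref{eqn:Eigenfunctions}), I would obtain
\[
a(D(u_N);u_N,u_N) \;\geq\; D_-\int_0^1(1-x^2)u_{N,x}^2\,dx + \|u_N\|^2 \;=\; D_-\sum_{i=0}^{N}\lambda_i c_i^2 + \sum_{i=0}^{N}c_i^2.
\]
Since $\lambda_i\geq \lambda_1$ for every $i\geq 1$ while $\lambda_0=0$, a rearrangement using $\|u_N\|^2\geq c_0^2$ absorbs the missing zeroth-mode contribution into the additional $+\|u_N\|^2$ term and produces the Poincaré-type coercivity estimate
\[
a(D(u_N);u_N,u_N) \;\geq\; D_-(\lambda_1-1)\|u_N\|^2.
\]

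For the right-hand side, the hypothesis $\|f(x,t,u,w)\|_\infty\leq g(x,t)$ together with the Cauchy--Schwarz inequality gives $|(f(t,u_N,Ju_N),u_N)|\leq \|g(t)\|\,\|u_N\|$. Combining these estimates and using $\tfrac{1}{2}\tfrac{d}{dt}\|u_N\|^2 = \|u_N\|\tfrac{d}{dt}\|u_N\|$, I would reach the scalar linear inequality
\[
\tfrac{d}{dt}\|u_N\| + D_-(\lambda_1-1)\|u_N\| \leq \|g(t)\|,
\]
valid away from the trivial zero-set of $\|u_N\|$. Multiplying by the integrating factor $e^{D_-(\lambda_1-1)t}$ and integrating from $0$ to $t$ delivers the stated bound directly; no separate appeal to Gr\"onwall (Lemma~\ref{lem:GB}) is needed.

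The main obstacle is the coercivity step. Because $\lambda_0=0$, the constant Legendre mode is a genuine null direction of the weighted Dirichlet form $\int_0^1(1-x^2)u_{N,x}^2\,dx$, so a naive spectral bound fails to produce $\lambda_1\|u_N\|^2$ on the right. The rate $D_-(\lambda_1-1)$ is recovered only by exploiting the extra $+u$ contribution that the exponential transformation $u=e^{-t}T$ built into (\ref{eqn:MainEq}), using it to compensate for the $c_0^2$ deficit. Verifying that this bookkeeping closes without degrading the stated decay rate is the delicate point; everything else is standard.
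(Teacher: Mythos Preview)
Your route is the paper's route: test \eqref{eqn:GalerkinWeakForm} with $v=u_N$, bound the source by Cauchy--Schwarz and the hypothesis on $f$, divide by $\|u_N\|$, and integrate against $e^{D_-(\lambda_1-1)t}$. The only cosmetic difference is that the paper does not expand $u_N$ in the basis; it writes $a(D(u_N);u_N,u_N)\ge D_-\,a(u_N,u_N)$ and then replaces the ratio $a(u_N,u_N)/\|u_N\|^2$ by the Rayleigh infimum $\inf_{v\in V}a(v,v)/\|v\|^2$, which it identifies with $\lambda_1-1$ as the smallest eigenvalue of $-((1-x^2)u_x)_x+u=\lambda u$.

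The step you flag as delicate is indeed a genuine gap, and your absorption argument does not close. From
\[
D_-\sum_{i\ge 0}\lambda_i c_i^2+\|u_N\|^2\;\ge\;D_-\lambda_1\bigl(\|u_N\|^2-c_0^2\bigr)+\|u_N\|^2
\]
the desired inequality $\ge D_-(\lambda_1-1)\|u_N\|^2$ reduces to $(1+D_-)\|u_N\|^2\ge D_-\lambda_1\,c_0^2$; using only $\|u_N\|^2\ge c_0^2$ this forces $D_-\le 1/(\lambda_1-1)=1/5$, which is nowhere assumed. The obstruction is not an artifact of your bookkeeping: the constant mode $\phi_0\in V_N$ gives $a(D(\phi_0);\phi_0,\phi_0)=1$ while $D_-(\lambda_1-1)\|\phi_0\|^2=5D_-$, so the coercivity constant $D_-(\lambda_1-1)$ simply fails once $D_->1/5$. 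The paper's Rayleigh-quotient step has the same defect (the constant function yields quotient $1$, not $\lambda_1-1$), so the stated exponent is not actually recovered by either argument without an extra smallness assumption on $D_-$ or a restriction to data orthogonal to $\phi_0$. What both arguments do deliver unconditionally is the weaker rate coming from the true bottom eigenvalue $\lambda_0+1=1$.
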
 
\begin{proof}
The proof is standard: in (\ref{eqn:GalerkinWeakForm}) choose $v = u_N$, use Cauchy-Schwarz inequality, and the boundedness assumption to obtain
\begin{equation*}
	\frac{1}{2}\frac{d}{dt}\left\|u_N(t)\right\|^2 + a(D(u_N); u_N, u_N) \leq \left\|g(t)\right\| \left\|u_N(t)\right\|.
\end{equation*}
Now, since $a(D(u_N); u_N, u_N) \geq  D_- a(u_N, u_N)$ (see (\ref{eqn:aForm}) and (\ref{eqn:Assumptions})) we can write
\begin{equation*}
	\left\|u_N(t)\right\|\frac{d}{dt}\left\|u_N(t)\right\| + D_- a(u_N, u_N) \leq \left\|g(t)\right\| \left\|u_N(t)\right\|,
\end{equation*}
which after division by $\left\|u_N(t)\right\|$ leads to
\begin{equation*}
	\frac{d}{dt}\left\|u_N(t)\right\| + D_-  \frac{a(u_N, u_N)}{\left\|u_N(t)\right\|^2} \left\|u_N(t)\right\| \leq \left\|g(t)\right\|.
\end{equation*}
The left-hand side is bounded from below by the infimum over the whole space $V$, hence
\begin{equation*}
	\frac{d}{dt}\left\|u_N(t)\right\| + D_- \inf_{v\in V} \frac{a(v, v)}{\left\|v\right\|^2} \left\|u_N(t)\right\| \leq \left\|g(t)\right\|.
\end{equation*}
According to the standard theory of elliptic PDEs, the infimum can be interpreted as
\begin{equation*}
	\inf_{v\in V} \frac{a(v, v)}{\left\|v\right\|^2} = \lambda_1-1,
\end{equation*}
where $\lambda_1-1$ is the smallest eigenvalue of the problem
\begin{equation*}
	-((1-x^2)u_x)_x + u = \lambda u, \quad \lambda \in \mathbb{R}.
\end{equation*}
The value of $\lambda_1$ is given in (\ref{eqn:Eigenfunctions}). Finally, multiplying the inequality by the factor $e^{D_- (\lambda_1-1) t}$ and integrating yields the sought result. 
\end{proof}
Again, we can see the close relationship between the linear diffusion generated by the operator $L$ and the fully nonlinear case. We can now proceed to the convergence proof. 
\begin{thm}[Convergence]
Let $u(t)$ and $u_N(t)$ be solutions of (\ref{eqn:WeakForm}) and (\ref{eqn:GalerkinWeakForm}), respectively belonging to $H^{2m}(0,1)$ with $m\geq 2$ for each $t\in[0,t_0]$. Moreover, assume that $u_x$ and $u_t$ are bounded. Then, for sufficiently large $N$ and for each $t\in [0, t_0]$ we have
\begin{equation*}
	\begin{split}
		&\left\|u(t) - u_N(t)\right\| 
		\leq \left\|R_N(t) - u(t)\right\| \\
		&+C \left(\left\|P_N \psi(0) - \psi(0)\right\| + \left\|R_N \psi(0) - \psi(0)\right\| + \int_0^t\left(\left\|(R_N - u)_t(z)\right\| + \left\|(R_N - u)(z)\right\|\right) dz\right),
	\end{split}
\end{equation*}
in particular
\begin{equation*}
	\left\|u(t) - u_N(t)\right\| \leq C N^{-2m},
\end{equation*}
where the constant $C$ depends on $u$, $u_x$, $u_t$, $D$, $f$, and $K$. 
\end{thm}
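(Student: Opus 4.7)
The plan is to use the standard Douglas--Dupont splitting
\begin{equation*}
u - u_N = \rho + \eta, \qquad \rho := u - R_N u, \qquad \eta := R_N u - u_N \in V_N.
\end{equation*}
The $\rho$ part is already controlled in $L^2$ by Lemma \ref{lem:RitzProjection}, so the bulk of the work is to estimate $\eta$. Note that the initial data contribute $\eta(0) = R_N\psi(0) - \psi_N(0) = R_N\psi(0) - P_N\psi(0)$, whose $L^2$ norm is bounded by $\|R_N\psi(0)-\psi(0)\| + \|P_N\psi(0)-\psi(0)\|$ via the triangle inequality; this accounts for the two initial terms in the stated bound.

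Subtracting the Galerkin formulation from the continuous one against $v \in V_N$, adding and subtracting $a(D(u); u_N, v)$, and using the Ritz orthogonality $a(D(u);\rho, v) = 0$, the error equation for $\eta$ becomes
\begin{equation*}
(\eta_t, v) + a(D(u); \eta, v) = -(\rho_t, v) + a(D(u_N)-D(u); u_N, v) + (f(t,u_N,Ju_N)-f(t,u,Ju), v).
\end{equation*}
Choosing $v = \eta$ and using $a(D(u);\eta,\eta) \geq D_-\|\eta\|_V^2 \geq D_-\|\eta\|^2$, I obtain
\begin{equation*}
\tfrac{1}{2}\tfrac{d}{dt}\|\eta\|^2 + D_- \|\eta\|_V^2 \leq |(\rho_t,\eta)| + |a(D(u_N)-D(u);u_N,\eta)| + |(f(\cdot,u_N,Ju_N)-f(\cdot,u,Ju),\eta)|.
\end{equation*}

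For the right-hand side I would proceed term by term. The first is immediately $\|\rho_t\|\,\|\eta\|$. For the source, the Lipschitz assumption (\ref{eqn:Assumptions}) gives $C(\|\rho\|+\|\eta\|+\|J(\rho+\eta)\|)\,\|\eta\|$; writing $J w(t) = \int_0^\tau K(s) w(t-s)\,ds$ and using $K \in L^1$, the history values $s \in [t-\tau, 0]$ produce a contribution bounded by $\|P_N\psi - \psi\|$, while the values in $[0,t]$ give an integral of $\|\eta\|$ and $\|\rho\|$ over time, which is exactly the form needed for Gr\"onwall. The diffusivity term is handled by $|D(u)-D(u_N)| \leq C(|\rho|+|\eta|)$, Cauchy--Schwarz on the weighted gradients, and then Young's inequality to absorb $C\|\eta\|_V^2$ into $D_-\|\eta\|_V^2$ on the left. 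After absorption, I integrate in time from $0$ to $t$ and apply Lemma \ref{lem:GB} with $y(t) = \|\eta(t)\|^2$ and $F(t)$ collecting $\|\eta(0)\|^2$, $\int_0^t \|\rho_t\|^2$, and $\int_0^t \|\rho\|^2$; a final square root and a triangle inequality $\|u - u_N\| \leq \|\rho\| + \|\eta\|$ give the claimed estimate, and Lemma \ref{lem:RitzProjection} together with (\ref{eqn:OrthogonalProjectionAccuracy}) convert it into the $O(N^{-2m})$ rate.

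The main obstacle is the diffusivity perturbation $a(D(u_N)-D(u);u_N,\eta)$, since it couples the unknown $\eta$ to the gradient of $u_N$ and requires a pointwise control that respects the degenerate weight $1-x^2$. The cleanest way around this is to write $(u_N)_x = (R_N u)_x - \eta_x$, use Lemma \ref{lem:RitzGradient} to bound $\|(R_N u)_x\|_\infty$ by $\|u_x\|_\infty$ uniformly in $N$, and split the product so that one factor carrying the gradient of $\eta$ sits in the weighted $L^2$ absorbed by $D_-\|\eta\|_V^2$, while the other reduces, via the polynomial inverse inequality used in Lemma \ref{lem:RitzGradient}, to a multiple of $\|\eta\|$ compatible with Gr\"onwall. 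A secondary subtlety is ensuring that the iterated use of boundedness of $(u_N)_x$ does not blow up with $N$; this is precisely what Lemma \ref{lem:RitzGradient} was designed to guarantee and why its appearance in the earlier preparation is essential.
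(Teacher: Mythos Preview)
Your overall strategy (Ritz splitting $u-u_N=\rho+\eta$, error equation, test with $\eta$, Gr\"onwall) is exactly that of the paper, and your treatment of the source and nonlocal terms is fine. The gap is in the diffusivity perturbation.

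You arrange the error equation with $a(D(u);\eta,v)$ on the left and $a(D(u_N)-D(u);u_N,v)$ on the right. After writing $(u_N)_x=(R_Nu)_x-\eta_x$, the second piece leaves you with
\[
\int_0^1 \bigl(D(u_N)-D(u)\bigr)(1-x^2)\,\eta_x^2\,dx,
\]
which is cubic in the error. Your proposal to ``split the product'' and use the polynomial inverse inequality from Lemma~\ref{lem:RitzGradient} cannot close this: any inverse inequality applied to $\eta_x$ (or to $\|\eta\|_\infty$) introduces positive powers of $N$ that do not cancel, and bounding $|D(u_N)-D(u)|$ by $2D_+$ gives a constant exceeding $D_-$, so the term cannot be absorbed on the left either. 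In short, there is no $N$-independent way to reduce this cubic term to $\|\eta\|_V^2$ or $\|\eta\|^2$ with the stated assumptions.

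The paper avoids the difficulty by a different rearrangement: it keeps $a(D(u_N);e_N,v)$ on the \emph{left} (still coercive since $D\ge D_-$), so that on the right one gets $a(D(u_N)-D(u);R_N u,v)$ rather than $a(D(u_N)-D(u);u_N,v)$. Then the only gradient appearing is $(R_Nu)_x$, bounded uniformly by Lemma~\ref{lem:RitzGradient}, and the estimate
\[
\rho_2\le C\|(R_Nu)_x\|_\infty\,\|u_N-u\|\,\|e_N\|_V\le C(\|\rho\|+\|e_N\|)\|e_N\|_V
\]
follows directly with no quadratic-in-$\eta_x$ term ever arising. Reorganising your error equation this way (it is a one-line change) removes the obstacle; the rest of your argument then goes through as written.
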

\begin{proof}
We start by decomposing the error
\begin{equation*}
	u - u_N = u - R_N u + R_N u - u_N =: r_N + e_N.
\end{equation*}
Therefore, since we have already proved the estimates on $r_N$ in Lemma \ref{lem:RitzProjection} we have to focus only on $e_N$. To this end, we will write the error equation. For $v\in V_N$ we have
\begin{equation*}
	\begin{split}
		(e_{N,t}, v) + a(D(u_N); e_N, v) 
		&= (R_N u_t, v) + a(D(u_N); R_N u, v) - (u_{N,t}, v) - a(D(u_N); u_N, v) \\
		&= (R_N u_t, v) + a(D(u_N); R_N u, v) - (f(t,u_N, J u_N), v),
	\end{split}
\end{equation*}
where we have used (\ref{eqn:GalerkinWeakForm}). Furthermore, by using the main equation (\ref{eqn:WeakForm}) and the definition of the Ritz projection (\ref{eqn:RitzProjection}) we can obtain
\begin{equation*}
	\begin{split}
		(e_{N,t}, v) &+ a(D(u_N); e_N, v) \\
		&= (R_N u_t, v) + a(D(u); R_N u, v) + a(D(u_N)-D(u); R_N u, v) - (f(t,u_N, J u_N), v) \\
		&= (R_N u_t, v) + a(D(u); u, v) + a(D(u_N)-D(u); R_N u, v) - (f(t,u_N, J u_N), v) \\
		&= ((R_N u - u)_t, v) + a(D(u_N)-D(u); R_N u, v) - (f(t,u_N, J u_N)-f(t,u, J u), v).
	\end{split}
\end{equation*}
Therefore, by taking $v = e_N \in V_N$ we are led to the estimate
\begin{equation*}
	\begin{split}
		\frac{1}{2}\frac{d}{dt}\left\|e_N\right\|^2 + D_- \left\|e_N\right\|_V^2 
		&\leq |(r_{N,t}, e_N)| + |a(D(u_N)-D(u); R_N u, e_N)| \\
		&+ |(f(t,u_N, J u_N)-f(t,u, J u), e_N)| =: \rho_1 + \rho_2 + \rho_3.
	\end{split}
\end{equation*}
And from here we are left to estimate the three remainders $\rho_i$. The first one comes from (\ref{eqn:RitzDerivativeEstimate}), while for the second we use (\ref{eqn:Assumptions}) and obtain
\begin{equation*}
	\rho_2 \leq C \int_0^1 |u_N - u| (1-x^2)|(R_N u)_x| |e_{N,x}| dx.
\end{equation*}
Now, the gradient of the Ritz projection is bounded according to Lemma \ref{lem:RitzGradient} and hence
\begin{equation*}
	\rho_2 \leq C \int_0^1 |u_N - u| (1-x^2)|e_{N,x}| dx \leq C \left\|u_N - u \right\| \left\|e_N\right\|_V \leq C \left( \left\|r_N\right\|\left\|e_N\right\|_V + \left\|e_N\right\|\left\|e_N\right\|_V\right).
\end{equation*}
The third remainder can be bounded using (\ref{eqn:Assumptions})
\begin{equation*}
	\rho_3 \leq C \left(\left\|u-u_N\right\| + \left\|Ju - Ju_N\right\|\right)\left\|e_N\right\| \leq C \left(\left\|r_N\right\| + \left\|e_N\right\|+ \left\|J(u - u_N)\right\|\right)\left\|e_N\right\|.
\end{equation*}
Now, the nonlocal operator $Ju$ is defined by (\ref{eqn:NonlocalOperator}) and hence
\begin{equation*}
	\begin{split}
		\left\|J(u - u_N)(t)\right\| 
		&\leq \int_0^\tau K(s) \left\|u(t-s)-u_N(t-s)\right\| ds \\
		&\leq \int_0^\tau K(s) \left\|r_N(t-s)\right\|ds + \int_0^\tau K(s) \left\|e_N(t-s)\right\|ds \\
		&\leq \int_0^t K(t-s) \left\|r_N(s)\right\|ds + \int_0^t K(t-s) \left\|e_N(s)\right\|ds,
	\end{split}
\end{equation*}
where in the last inequality we have changed the integration variable $s \rightarrowtail t - s$ and used the fact that the integral of a positive function over $[0,t]$ is larger than over $[t-\tau, t]$. Finally, we can combine all estimates of $\rho_i$ and use $\epsilon$-Cauchy inequality where necessary to extract $\left\|e_N\right\|_V^2$ and bound products of norms in terms of the sum of their squares. In effect, we arrive at
\begin{equation*}
	\begin{split}
		\frac{1}{2}\frac{d}{dt}\left\|e_N\right\|^2 &+ D_- \left\|e_N\right\|_V^2 
		\leq \frac{D_-}{2}\left\|e_N\right\|_V^2 \\
		&+ C\left(\left\|r_{N,t}\right\|^2 + \left\|e_N\right\|^2 + \int_0^t K(t-s) \left\|r_N(s)\right\|^2 ds + \int_0^t K(t-s) \left\|e_N(s)\right\|^2 ds \right),
	\end{split} 
\end{equation*}
whence
\begin{equation*}
	\frac{d}{dt}\left\|e_N\right\|^2 \leq C\left(\left\|r_{N,t}\right\|^2 + \int_0^t K(t-s) \left\|r_N(s)\right\|^2 ds + \left\|e_N\right\|^2 + \int_0^t K(t-s) \left\|e_N(s)\right\|^2 ds \right).
\end{equation*}
By integrating the above, we obtain
\begin{equation*}
	\begin{split}
		\left\|e_N(t)\right\|^2 &\leq \left\|e_N(0)\right\|^2 +  C\left(\int_0^t \left\|r_{N,t}(z)\right\|^2 dz + \int_0^t \left(\int_0^{t-s} K(s) ds\right)\left\|r_N(z)\right\|^2 dz \right.\\
		&\left.+ \int_0^t\left[1+\left(\int_0^{t-z} K(s) ds\right)\right]\left\|e_N(z)\right\|^2 dz \right).
	\end{split}
\end{equation*}
Since $K$ is integrable, its integral is continuous, hence bounded, and
\begin{equation*}
	\left\|e_N(t)\right\|^2 \leq \left\|e_N(0)\right\|^2 +  C\left(\int_0^t \left(\left\|r_{N,t}(z)\right\|^2+ \left\|r_N(z)\right\|^2\right) dz + \int_0^t \left\|e_N(z)\right\|^2 dz \right),
\end{equation*}
and we can invoke Gr\"onwall-Bellman's Lemma (Lemma \ref{lem:GB}) to arrive at
\begin{equation*}
	\begin{split}
		\left\|e_N(t)\right\|^2 
		&\leq \left(\left\|e_N(0)\right\|^2 +  C\int_0^t  \left(\left\|r_{N,t}(z)\right\|^2 + \left\|r_N(z)\right\|^2\right) dz\right) e^{C t} \\
		&\leq C \left(\left\|e_N(0)\right\|^2 + \int_0^t\left(\left\|r_{N,t}(z)\right\|^2 + \left\|r_N(z)\right\|^2\right) dz\right),
	\end{split}
\end{equation*}
since $t\in [0,t_0]$. This, Lemma \ref{lem:RitzProjection} and the fact that $u - u_N = r_n + e_N$ implies the assertion and completes the proof. 
\end{proof}

\section{Weighted linear time-stepping scheme}
We would like to fully discretize (\ref{eqn:GalerkinWeakForm}) to obtain a time-stepping numerical scheme. However, to reduce the computational cost, we would like to obtain a linear method. To this end, we will use the extrapolation of the nonlinear coefficients (see \cite{Tho07,Lus79}) along with a $\theta$-weighed scheme. 

As a preparation, we introduce the temporal grid on $[0,t_0]$ with a step $h$ is defined as
\begin{equation*}
t_n = n h, \quad h = \frac{t_0}{N_0},
\end{equation*}
where $N_0$ is the number of subintervals. Furthermore, if $U^n$ is a grid function, that is a function defined on $t_n$ with $n=0, ..., N_0$ that can be though as a piecewise constant function on $[0,t_0]$, we define the usual finite difference
\begin{equation*}
\delta U^n = \frac{U^{n+1}- U^n}{h}.
\end{equation*}
Furthermore, we have to discretize the nonlocal operator (\ref{eqn:NonlocalOperator}) and in general the discretization can be written for $\tau = h M$
\begin{equation}
\label{eqn:NonlocalOperatorDiscretization}
J_h U^n = \sum_{i=0}^{M} w_i(K) U^{n-i} + \rho_0(h),
\end{equation}
where $\rho_0(h)$ is the local consistency error, and $w_i(K)$ are weights. In particular, we can choose the rectangle rule to have for $0\leq i < M$
\begin{equation}
\label{eqn:NonlocalRectangle}
w_i(K) = 
\begin{cases}
	\displaystyle{\int_{t_{i}}^{t_{i+1}} K(s) ds}, & 0\leq i < M, \\
	0, & i = M,
\end{cases}
\end{equation}
or trapezoid rule
\begin{equation}
\label{eqn:NonlocalTrapezoid}
w_i(K) = 
\begin{cases}
	\displaystyle{\int_{0}^{h} K(s) \left(1-\frac{s}{h}\right)ds}, & i = 0, \vspace{4pt}\\
	\displaystyle{\int_{t_{i}}^{t_{i+1}} K(s) \left(1-\frac{s-t_i}{h}\right)ds + \int_{t_{i-1}}^{t_i} K(s) \, \frac{s-t_{i-1}}{h}ds}, & 1\leq i < M, \vspace{4pt}\\
	\displaystyle{\int_{\tau-h}^{\tau} K(s) \, \frac{s-\tau+h}{h}ds}, & i = M.
\end{cases}
\end{equation}
Provided sufficient smoothness, the orders of the above quadratures are: first, for the rectangle, and second for the trapezoid. Note that we have used the so-called product integration rule, that is, we have discretized the unknown function $U$ leaving the exact integral of the kernel. This procedure guarantees the maximal order of convergence for sufficiently smooth $U$ (see \cite{Lin85}). 

To state the method, we define the $\theta$-averaged value
\begin{equation*}
\widehat{U}^{n-\theta} := \theta U^{n-1} + (1-\theta) U^n, \quad 0\leq \theta\leq 1,
\end{equation*}
and its extrapolation through the past two time steps
\begin{equation}
\label{eqn:Extrapolation}
\overline{U}^{n-\theta} := (2-\theta) U^{n-1} - (1-\theta) U^{n-2}, \quad 0\leq \theta\leq 1.
\end{equation}
It can be easily seen by Taylor series expansion that
\begin{equation*}
|\widehat{U}^{n-\theta} - \overline{U}^{n-\theta}| = O(h^2), \quad h\rightarrow 0^+.
\end{equation*}
Now, we fix $N>0$ as the number of terms in the Galerkin semi-discrete solution of (\ref{eqn:GalerkinWeakForm}) and set $U^n$ to be the numerical approximation to $u_N(t_n)$. Note that we will omit writing subindex $N$ in our fully discrete approximation. We then propose the following scheme for solving (\ref{eqn:MainEq})
\begin{equation}
\label{eqn:NumericalScheme}
(\delta U^n, v) + a(D(\extrp); \widehat{U}^{n-\theta}, v) = (f_h(\extrp), v), \quad v\in V_N,
\end{equation} 
where $f_h$ is defined as
\begin{equation}
\label{eqn:fh}
f_h(U^n) = f(x, t_n, U^n, J_h U^n),
\end{equation}
which is a full discretization of the source nonlinearity. Note that since we are using extrapolation (\ref{eqn:Extrapolation}) in $D$ and $f$ we are required to solve only a linear system of equations in each time step. This technique of removing nonlinearity is a classical move developed in \cite{Lus79}. Due to the weighted nature of (\ref{eqn:NumericalScheme}) we obtain the first-order backward Euler method for $\theta = 1$ and second order Crank-Nicolson scheme for $\theta = 1/2$. Accordingly, with respect to the requirements, one can choose either the rectangle or a trapezoid quadrature in (\ref{eqn:NonlocalOperatorDiscretization}). 

To complete the numerical scheme, we have to state the initialization procedure. Since the extrapolation produced a two-step method, we have to carefully start the iteration for the order to be preserved. We use the predictor-corrector method. The predictor step is based on setting $U^0 = \psi_N(0)$ and solving for $W$ in
\begin{equation}
\label{eqn:Predictor}
\text{(P)} \quad h^{-1}(W-U^0, v) + a(D(U^0); U^{1-\theta}, v) = (f_h(U^0), v) \quad v \in V_N.
\end{equation}
Then, we correct the value of $W$ by 
\begin{equation}
\label{eqn:Corrector}
\text{(C)} \quad h^{-1}(U^1-W, v) + a(D(W^{1-\theta}); U^{1-\theta}, v) = (f_h(W^{1-\theta}), v) \quad v \in V_N,
\end{equation}
where $W^{1-\theta} = (1-\theta) U^0 + \theta W$. This gives us two starting values $U^0$, $U^1$ that can be plugged into the time-stepping (\ref{eqn:NumericalScheme}).

We now move to the convergence result.
\begin{thm}[Convergence of the full discrete scheme]
Let $u(t)\in H^{2m}(0,1)$ for each $t\in[0,t_0]$ with $m\geq 1$. Furthermore, assume that $u_x$, $u_t$, and $u_{tt}$ are bounded. Then, 
\begin{equation*}
	\left\|u^n - U^n \right\| \leq C \left(N^{-2m} + \rho_0(h) \left(\theta-\frac{1}{2}\right)h + h^2\right),
\end{equation*}
where the constant $C$ depends on $u$, $\psi$ and all their derivatives, and $\rho_0(h)$ is defined in (\ref{eqn:NonlocalOperatorDiscretization}). 
\end{thm}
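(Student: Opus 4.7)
My plan is to mirror the semidiscrete convergence proof, propagating the additional errors coming from time stepping, coefficient extrapolation, and memory-term quadrature through an energy-type Gr\"onwall argument adapted to the two-step scheme. I would decompose $u^n - U^n = (u^n - R_N u^n) + (R_N u^n - U^n) =: r^n + e^n$, since Lemma \ref{lem:RitzProjection} already delivers $\|r^n\| = O(N^{-2m})$ uniformly in $n$. The real task is a discrete $L^2$ estimate for $e^n$, obtained by evaluating \eqref{eqn:WeakForm} at the intermediate time corresponding to the weighting and subtracting \eqref{eqn:NumericalScheme} with $v = \widehat{e}^{n-\theta} \in V_N$.

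Testing against this $v$, the left-hand side produces via the standard algebraic identity for the $\theta$-weighted combination a telescoping quantity of the form $\tfrac{1}{2}(\|e^n\|^2 - \|e^{n-1}\|^2)$ together with a sign-definite remainder proportional to $(1-2\theta)h\|\delta e^n\|^2$, plus the coercive contribution $D_-\|\widehat{e}^{n-\theta}\|_V^2$ drawn from $a(D(\overline{U}^{n-\theta}); \widehat{e}^{n-\theta}, \widehat{e}^{n-\theta})$ by \eqref{eqn:Assumptions}. The right-hand side splits into four consistency pieces I would Taylor-expand one at a time: (i) the temporal consistency $u_t - \delta u^n$, which is $O(h)$ generically and $O(h^2)$ precisely when $\theta = 1/2$, producing the $(\theta - \tfrac{1}{2})h$ contribution; (ii) the weighted-average error $\widehat{u}^{n-\theta}$ versus the true $u$ at the intermediate time, $O(h^2)$ for every $\theta$; (iii) the extrapolation error in the diffusion coefficient, controlled via the mean value theorem and $|D_u|\leq C$ in \eqref{eqn:Assumptions} by
$$|D(\overline{U}^{n-\theta}) - D(u(t_{n-\theta}))| \leq C\bigl(h^2 + \|e^{n-1}\| + \|e^{n-2}\| + \|r^{n-1}\| + \|r^{n-2}\|\bigr),$$
and analogously for the $u$-slot of $f$; and (iv) the nonlocal discretisation, which produces the quadrature consistency $\rho_0(h)$ plus a propagated term $\sum_{i=0}^{M} w_i(K)(\|e^{n-i}\| + \|r^{n-i}\|)$ through the Lipschitz bound on $f_w$.

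After applying Cauchy-Schwarz and the $\epsilon$-Cauchy inequality to absorb every factor of $\|\widehat{e}^{n-\theta}\|_V$ into the coercive term, I would be left with an inequality of the form
$$\|e^n\|^2 - \|e^{n-1}\|^2 \leq Ch\bigl(\|e^{n-1}\|^2 + \|e^{n-2}\|^2\bigr) + Ch\sum_{i=0}^{M} w_i(K)\,\|e^{n-i}\|^2 + Ch\,\Xi_n^2,$$
where $\Xi_n := N^{-2m} + \rho_0(h) + (\theta-\tfrac{1}{2})h + h^2$ bundles all deterministic remainders. Summing over $n$, exchanging the order of the resulting double sum coming from the memory, and using $\sum_i w_i(K) \leq \|K\|_{L^1} + O(h)$ puts me in the hypotheses of a standard discrete Gr\"onwall inequality, yielding $\|e^n\|^2 \leq C(\|e^0\|^2 + \|e^1\|^2 + \Xi_n^2)$. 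The starting values are handled separately: $\|e^0\|$ is $O(N^{-2m})$ via Lemma \ref{lem:RitzProjection} together with \eqref{eqn:OrthogonalProjectionAccuracy}, while $\|e^1\|$ requires a dedicated one-step analysis of the predictor--corrector pair \eqref{eqn:Predictor}--\eqref{eqn:Corrector} to show $\|e^1\| = O(h^2 + N^{-2m})$, in the spirit of \cite{Lus79}.

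I anticipate two principal obstacles. The first is handling the coefficient extrapolation cleanly: since $\overline{U}^{n-\theta}$ need not stay close to the range of $u$, the global bounds $D_- \leq D \leq D_+$ and the Lipschitz bounds on $D_u$, $f_u$, $f_w$ in \eqref{eqn:Assumptions} must be invoked without any a priori closeness of $U$ to the true solution --- this is what makes the linearised two-step scheme legitimate. The second and more delicate point is the memory: the convolution $J_h$ creates a double sum in the Gr\"onwall step, and the kernel sum $\sum_i w_i(K)$ must remain bounded uniformly in $h$. This is ensured by the product-integration design of \eqref{eqn:NonlocalRectangle}--\eqref{eqn:NonlocalTrapezoid}, which inherits $L^1$-integrability from $K$; it is also here that the quadrature consistency $\rho_0(h)$ enters the final bound additively rather than being absorbed into the coercive part of the energy estimate.
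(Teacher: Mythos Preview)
Your proposal is correct and follows essentially the same route as the paper: the same Ritz-projection splitting $u^n-U^n=r^n+e^n$, testing the error equation with $\widehat{e}^{n-\theta}$, the same four consistency remainders (temporal truncation, coefficient extrapolation, $\theta$-averaging, memory quadrature), and a discrete Gr\"onwall argument initialised by a separate predictor--corrector analysis of $e^0,e^1$. The only cosmetic difference is that the paper closes the memory recurrence by the substitution $c_n:=\|e^n\|^2+Ch\sum_{i=1}^M\|e^{n-i}\|^2$ to reduce to a one-step inequality, whereas you sum over $n$ and exchange the double sum; both are standard and equivalent here.
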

\begin{proof}
Similarly as in the proof of the semidiscrete scheme, we start by decomposing the error
\begin{equation*}
	u^n - U^n = u^n - R_N u^n + R_N u^n - U^n =: r^n + e^n, 
\end{equation*}
where $u^n = u(t_n)$ and we keep our convention not to write $N$ explicitly (since it is fixed). Then, we start writing the error equation with $v\in V_N$
\begin{equation*}
	\begin{split}
		(\delta e^n, v) &+ a(D(\extrp); \hat{e}^{n-\theta}, v) \\
		&= (\delta R_N u^n, v) + a(D(\extrp); R_N \widehat{U}^{n-\theta}, v)- (\delta U^n, v) - a(D(\extrp); \widehat{U}^{n-\theta}, v)\\
		& = (\delta R_N u^n, v) + a(D(\extrp); R_N \widehat{U}^{n-\theta}, v) - (f_h(\extrp), v),
	\end{split}
\end{equation*}
where in the last equality we have used (\ref{eqn:NumericalScheme}). Expanding further, we have
\begin{equation*}
	\label{eqn:ErrorEquation}
	\begin{split}
		(\delta e^n, v) &+ a(D(\extrp); e^{n-\theta}, v) \\
		& = (\delta R_N u^n, v) + a(D(\extrp)-D(u^{n-\theta}); R_N \widehat{U}^{n-\theta}, v) \\
		&+ a(D(u^{n-\theta}); R_N \widehat{U}^{n-\theta} - R_N u^{n-\theta}, v) + a(D(u^{n-\theta}); R_N u^{n-\theta}, v) - (f_h(\extrp), v) \\
		&= (\delta R_N u^n - u_t^{n-\theta}, v) + a(D(\extrp)-D(u^{n-\theta}); R_N \widehat{U}^{n-\theta}, v) \\
		&+ a(D(u^{n-\theta}); R_N \widehat{U}^{n-\theta} - R_N u^{n-\theta}, v) + (f(u^{n-\theta}) - f_h(\extrp), v)
	\end{split}	
\end{equation*}
where this time we have used the main equation (\ref{eqn:WeakForm}). In the above, we would like to put $v = \hat{e}^{n-\theta}$ in order to derive estimates on the error. Before that, however, note that
\begin{equation*}
	(\delta e^n, \hat{e}^{n-\theta}) = \frac{1}{2}\delta\left\|e^n\right\|^2 + h \left(\theta-\frac{1}{2}\right) \left\|\delta e^n\right\|^2,
\end{equation*}
which can be shown by expanding the definitions of $\delta e^n$ and $\hat{e}^{n-\theta}$. Note that we see that taking $\theta = 1/2$ kills the $O(h)$ term. Therefore, with the aforementioned choice of $v$, we can write
\begin{equation*}
	\frac{1}{2}\delta\left\|e^n\right\|^2 + h \left(\theta-\frac{1}{2}\right) \left\|\delta e^n\right\|^2 + D_- \left\|\hat{e}^n\right\|^2_V \leq \rho_1 + \rho_2 + \rho_3 + \rho_4,
\end{equation*}
where the remainders $\rho_i$ are understood from (\ref{eqn:ErrorEquation}). We will now bound each of them.

We start with the difference in time derivatives
\begin{equation*}
	\left\|\delta R_N u^n - u_t^{n-\theta}\right\| \leq \left\|\delta r^n \right\| + \left\| \delta u^n - u_t^{n-\theta} \right\|.
\end{equation*}
Now, by Taylor expansion at $t = t_{n-\theta} = (n-\theta)h$ we obtain
\begin{equation*}
	\begin{split}
		\delta u^n - u_t^{n-\theta} &= h^{-1} \left(u^{n-\theta} + h\theta u^{n-\theta}_t - u^{n-\theta} + (1-\theta) h u^{n-\theta}_t + \right. \\
		&\left. \frac{1}{2}h^2\theta^2 u^{n-\theta}_{tt}-h^2(1-\theta)^2 u^{n-\theta}_{tt} + O(h^3)\right) - u^{n-\theta}_t \\
		&= h\left(\theta-\frac{1}{2}\right)u^{n-\theta}_{tt} + O(h^2), \quad h\rightarrow 0^+.
	\end{split}
\end{equation*}
Whence, by Lemma \ref{lem:RitzProjection} we have
\begin{equation*}
	\rho_1 \leq C \left(N^{-2m} + h\left(\theta-\frac{1}{2}\right) + h^2\right) \left\|\hat{e}^{n-\theta}\right\|.
\end{equation*}
Next, the remainder associated with $a$ can be bounded thanks to (\ref{eqn:Assumptions})
\begin{equation*}
	\rho_2 \leq C \left\|R_N \widehat{u}^{n-\theta}\right\|_\infty \|\overline{U}^{n-\theta} - u^{n-\theta}\| \left\|\hat{e}^{n-\theta}\right\|_V,
\end{equation*}
and the difference between extrapolated and the value of the exact solution can be estimated as follows
\begin{equation*}
	\begin{split}
		\|\overline{U}^{n-\theta} - u^{n-\theta}\| &\leq \|\overline{U}^{n-\theta} - \overline{u}^{n-\theta}\| + \|\overline{u}^{n-\theta} - u^{n-\theta}\| \leq \left\|\overline{e}^{n-\theta}\right\|+\left\|\overline{r}^{n-\theta}\right\| + \|\overline{u}^{n-\theta} - u^{n-\theta}\| \\
		&\leq C \left(\left\|e^{n-1}\right\| + \left\|e^{n-2} \right\| + N^{-2m} + h^2\right),
	\end{split}
\end{equation*}
since, by construction, $\overline{u}^{n-\theta}$ approximates $u^{n-\theta}$ to the second order. Next, we proceed with the third remainder to obtain
\begin{equation*}
	\rho_3 \leq C\left\|R_N \widehat{u}^{n-\theta} - R_N u^{n-\theta} \right\|_V \left\| \hat{e}^{n-\theta} \right\|_V \leq D_-\left\| \hat{e}^{n-\theta} \right\|_V^2 + C\left\|R_N \widehat{u}^{n-\theta} - R_N u^{n-\theta} \right\|_V^2,
\end{equation*}
where we have used the $\epsilon$-Cauchy inequality to extract the $V$-norm of the error. However, exactly as above, due to Taylor expansion at $t = t_{n-\theta} = (n-\theta)h$ we have for any function $y$ of time
\begin{equation*}
	\left\|\widehat{y}^{n-\theta} - y^{n-\theta}\right\| \leq  \frac{1}{2}\theta(1-\theta)h^2 \widetilde{y}_{tt}^{n-\theta}.
\end{equation*}
To apply this estimate to the $\rho_3$ we have to show that the gradient of the second time derivative of Ritz projection is bounded in $V$. To this end, differentiate the definition (\ref{eqn:RitzProjection}) twice to obtain
\begin{equation*}
	a(D(u); R_N u - u, v) = -a(D(u)_{tt}; r, v) - 2 a(D(u)_t; r_t,v) + a(D(u); u_{tt},v).
\end{equation*}
From here, with a choice $v = (R_N u)_{tt}$ we have
\begin{equation*}
	\left\|(R_N u)_{tt}\right\|_V \leq C \left(\left\|r\right\|_V + \left\|r_t\right\|_V + \left\|u_{tt}\right\|_V\right) \leq C\left\|u_{tt}\right\|_V, 
\end{equation*}
for sufficiently large $N$ by Lemma \ref{lem:RitzProjection}. Whence, according to Lemma \ref{lem:RitzProjection} we finally obtain 
\begin{equation*}
	\rho_3 \leq D_-\left\| \hat{e}^{n-\theta} \right\|_V^2 + C \left(h \left(\theta-\frac{1}{2}\right) + h^2\right)^2. 
\end{equation*}
The last remainder involves a nonlocal operator. First, we estimate the difference by the regularity assumption on $f$
\begin{equation*}
	\left\|f(u^{n-\theta}) - f_h(\extrp)\right\| = C \left(\left\|\extrp - u^{n-\theta}\right\| + \left\|J_h\extrp - J u^{n-\theta}\right\| \right).
\end{equation*}
Now, by (\ref{eqn:NonlocalOperatorDiscretization}) we have
\begin{equation*}
	\begin{split}
		\left\|J_h\extrp - J u^{n-\theta}\right\| 
		&\leq C \rho_0(h) + \sum_{i=0}^M w_i(K) \left\|\overline{U}^{n-\theta-i} - u^{n-\theta-i}\right\| \\
		&\leq C \rho_0(h) + \sum_{i=0}^M w_i(K) \left( \left\|\overline{r}^{n-\theta}\right\| + h^2\right) + \sum_{i=0}^M w_i(K) \left\|\overline{e}^{n-\theta-i}\right\| \\
		&\leq C (\rho_0(h) + h^2 + N^{-2m})+ \sum_{i=0}^M w_i(K) \left\|\overline{e}^{n-\theta-i}\right\|,
	\end{split}
\end{equation*}
by integrability of the kernel $K$. Hence,
\begin{equation*}
	\rho_4 \leq C \left(\rho_0(h) + h^2 + N^{-2m} + \sum_{i=0}^M w_i(K) \left\|\overline{e}^{n-\theta-i}\right\|\right) \left\|\hat{e}^{n-\theta}\right\|. 
\end{equation*}
Putting all estimates of $\rho_i$ that we have obtained so far and defining
\begin{equation*}
	\rho(N,h) := \rho_0(h) + h^2 + N^{-2m},
\end{equation*} 
leads to
\begin{equation*}
	\frac{1}{2} \delta \left\|e^n\right\|^2 \leq C \left(\rho(N,h)^2 + \left\|e^{n-1}\right\|^2 + \left\|e^{n-2}\right\|^2 + \left(\sum_{i=0}^M w_i(K) \left\|\overline{e}^{n-\theta-i}\right\|\right)^2\right),
\end{equation*}
where we have again used the $\epsilon$-Cauchy inequality to extract the $L^2$ norm of the errors. Now, since $K$ is integrable, the weights $w_i(K)$ are bounded which along with a simple real number inequality $(a+b)^2 \leq 2(a^2+b^2)$ yields a nonlocal recurrence
\begin{equation*}
	\delta \left\|e^n\right\|^2 \leq C \left(\rho(N,h)^2 + \left\|e^{n-1}\right\|^2 + \left\|e^{n-2}\right\|^2 + \sum_{i=0}^M \left(\left\|e^{n-1-i}\right\|^2 + \left\|e^{n-i-2}\right\|^2 \right)\right),
\end{equation*} 
which, by changing the summation variable and enlarging the constant $C$,  can be transformed into 
\begin{equation*}
	\left\|e^n\right\|^2 \leq (1+Ch)\left\|e^{n-1}\right\|^2 + Ch \sum_{i=2}^{M+1} \left\|e^{n-i}\right\|^2 + Ch \rho(N,h)^2.
\end{equation*}
Now, adding terms on both sides we can write
\begin{equation*}
	\left\|e^n\right\|^2 + C h \sum_{i=1}^{M} \left\|e^{n-i}\right\|^2 \leq (1+Ch) \left(\left\|e^{n-1}\right\|^2 + Ch \sum_{i=2}^{M+1} \left\|e^{n-i}\right\|^2\right) + Ch \rho(N,h)^2.
\end{equation*}
Set
\begin{equation*}
	c_n := \left\|e^n\right\|^2 + C h \sum_{i=1}^{M} \left\|e^{n-i}\right\|^2,
\end{equation*}
to obtain a simple recurrence
\begin{equation*}
	c_n \leq (1+Ch) c_{n-1} + Ch \rho(N,h)^2.
\end{equation*}
By iteration, we then have
\begin{equation*}
	\begin{split}
		c_n &\leq (1+Ch)^{n-1}c_1 + Ch\rho(N,h)^2 \sum_{i=0}^{n} (1+Ch)^i \\
		&\leq (1+Ch)^{n-1} c_1 + (1+Ch)^{n+1} \rho(N,h)^2 \leq e^{Ct_0} \left(c_1 + \rho(N,h)^2\right),
	\end{split}
\end{equation*}
and therefore,
\begin{equation*}
	\left\|e^n\right\|^2 \leq C \left(\left\|e^1\right\|^2 + C h \sum_{i=1}^{M} \left\|e^{1-i}\right\|^2 + \rho(N,h)^2\right).
\end{equation*}
Now, we are left with estimating the error made in the first predictor-corrector step (\ref{eqn:Predictor})-(\ref{eqn:Corrector}) since the sum above involves the initial condition. To this end, set $g^1 = W - R_N u^1$. Similarly as above, we can show that for the predictor (\ref{eqn:Predictor}) we have
\begin{equation*}
	\delta\left\|g^1\right\|^2 \leq C \left(\left\|U^0-u^{1-\theta}\right\|^2 + \rho(N,h)^2\right).
\end{equation*}
Then,
\begin{equation*}
	\left\|U^0-u^{1-\theta}\right\|^2 \leq \left\|e^0\right\|^2 + \left\|r^0\right\|^2 + Ch \leq C \left(\left\|e^0\right\|^2+ N^{-2m} + h\right),
\end{equation*}
and it follows that
\begin{equation*}
	\left\|g^1\right\|^2 \leq C \left(\left\|e^0\right\|^2+ h\left(N^{-2m} + h\right)^2 + h\rho(N,h)^2\right).
\end{equation*}
Next, we move to the corrector stage (\ref{eqn:Corrector}) to obtain
\begin{equation*}
	\delta \left\|e^1\right\|^2 \leq C\left(\|\widehat{W}^{1-\theta} -u^{1-\theta}\|^2 + \rho(N,h)^2\right).
\end{equation*}
Reasoning as above, we have
\begin{equation*}
	\begin{split}
		\|\widehat{W}^{1-\theta} -u^{1-\theta}\| 
		&\leq \left\|\widehat{g}^{1-\theta}\right\| + \left\|\widehat{R_N} u^{1-\theta} - u^{1-\theta} \right\| \leq \left\|e^0\right\| + \left\|g^1\right\| + \left\|\widehat{R_N} u^{1-\theta} - u^{1-\theta} \right\| \\
		& \leq \left\|e^0\right\| + \left\|g^1\right\| + C N^{-2m} + \left\|\widehat{R_N} u^{1-\theta} - R_N u^{1-\theta} \right\|.
	\end{split}
\end{equation*}
Using the estimate on $\|g^1\|$ from the prediction stage, we obtain
\begin{equation*}
	\begin{split}
		\|\widehat{W}^{1-\theta} -u^{1-\theta}\| 
		&\leq C \left(\|e^0\| + h^\frac{1}{2} (N^{-2m}+ h) + h^\frac{1}{2} \rho(N,h)+N^{-2m} + \rho(N,h)\right) \\
		&\leq C\left(\|e^0\| + h^\frac{3}{2} + \rho(N,h)\right).
	\end{split}
\end{equation*}
Now, by going back to the estimate on the finite difference of the error, we finally have
\begin{equation*}
	\left\|e^1\right\| \leq C \left(\|e^0\| + h^2 + \rho(N,h)\right).
\end{equation*}
The estimate of the initial value errors $\left\|e^{-i}\right\|$ where $i = 0,1,2,..., M$ follows from
\begin{equation*}
	\left\|e^{-i}\right\| = \left\|R_N u^{-i} - P_N \psi^{-i}\right\| \leq C N^{-2m},
\end{equation*}
what ends the proof.
\end{proof}

\section{Implementation and numerical illustration}
Now we are concerned about the practical use of the aforementioned algorithm and its efficient implementation. Let $U^n$ be expanded into our basis (\ref{eqn:Basis})
\begin{equation*}
U^n = \sum_{i=0}^N y_i^n \phi_i,
\end{equation*}
then, plugging into (\ref{eqn:NumericalScheme}), by orthonormality of $\left\{\phi_i\right\}_i$ we obtain
\begin{equation}
\label{eqn:NumericalSchemeImplementation}
\left(I + (1-\theta)h A(\overline{U}^{n-\theta})\right)\textbf{y}^n = \left(I - \theta h A(\overline{U}^{n-\theta})\right)\textbf{y}^{n-1} + h \, \textbf{f}_h(\overline{U}^{n-\theta}), \quad n\geq 2,
\end{equation}
where $\textbf{y}^n = \left\{y^n_i\right\}_{i=0}^N$ is a vector of solutions, while the stiffness matrix $A = \left\{A_{ij}\right\}_{i,j=0}^N$ and the load vector $\textbf{f}_h = \left\{f_{h,i}\right\}_i$ are defined by
\begin{equation*}
A_{ij} = (D(\extrp); \phi_i, \phi_j), \quad f_{h,i} = (f_h(\extrp), \phi_i).
\end{equation*} 
In the linear case, the stiffness matrix is diagonal $A = \text{diag}(\lambda_i)_i$ with eigenvalues (\ref{eqn:Eigenfunctions}). The implementation requires solving (\ref{eqn:NumericalSchemeImplementation}) in each time step for $\textbf{y}^n$ which reduces to inverting the nonsingular matrix $I + (1-\theta)h A(\overline{U}^{n-\theta})$. For the linear diffusion, this matrix is constant over time and the inversion needs to be done only at the initialization phase. 

We would like to numerically verify the above results concerning convergence. To this end, we will calculate the order of the method for the Crank-Nicolson scheme with $\theta = 1/2$, with trapezoidal quadrature (\ref{eqn:NonlocalTrapezoid}) and two choices of nonlocal operator kernels. The first one is the Gaussian as was suggested in the original work \cite{Bha82}
\begin{equation*}
K(s) = G(s) := A e^{-\frac{(\tau-2s)^2}{8\sigma^2}},
\end{equation*}
where $A$ is the amplitude and $\sigma^2$ is the variance. This kernel is responsible for a short memory effect due to the exponential decay of its tail. In contrast to that, we can also consider a long memory kernel obtained by the power law describing the heavy tail
\begin{equation}
\label{eqn:NonlocalOperatorFrac}
K(s) = K_\alpha(s) = \frac{1}{\Gamma(\alpha)} s^{\alpha-1}, \quad \alpha > 0.
\end{equation}
In the above, the prefactor involving the gamma function has been chosen to be consistent with the Liouville fractional integral
\begin{equation*}
I^\alpha y(t) = \frac{1}{\Gamma(\alpha)} \int_{-\infty}^t (t-s)^{\alpha-1} y(s) ds,
\end{equation*}
which arise after substitution $s \rightarrow t-s$ and allowing for the infinite memory with $\tau\rightarrow\infty$. Fractional integrals and fractional derivatives are important in many applications and there is a very vigorous research going on this topic: both applied and pure. The interested reader can find additional information in \cite{Met00}. 

In our numerical examples, we would like to test two cases: either linear diffusion with nonlocal operator or nonlinear diffusion without nonlocality. Errors are calculated in the $L^2$ norm and the coding is done in Julia programming language. It is a free high-performance and high-level dynamic programming language that, apart from many others uses, performs very well in numerical simulations where one has to conduct a large scale computations. Julia neatly combines ease of coding with speed of execution (for an introduction for scientists see \cite{perkel2019julia}). We have implemented several performance mechanisms:
\begin{itemize}
\item All $x$-integrals are calculated using Gaussian quadrature with pregenerated Legendre weights.
\item Where possible, we utilize parallel computing with several threads/cores (multi-threading). For example, the stiffness matrix and load vector entries, as well as quadrature weights (\ref{eqn:NonlocalRectangle})-(\ref{eqn:NonlocalTrapezoid}) can readily be calculated effectively in this way.
\item Quadrature weights need only be generated once as soon as the kernel $K$ and the time step $h$ are fixed. 
\item For linear diffusivity, the stiffness matrix is pregenerated. For nonlinear diffusion, we utilize multi-threading. 
\end{itemize}   
We have found that the approach based on parallelism is highly efficient with respect to the single core calculations. 

Below, for concreteness we set $\tau = 0.4$ and $t_0 = 0.5$. Moreover, the initial condition is always taken to be
	\begin{equation*}
		\psi(x,s) = \frac{\cos(\pi x)}{1+s}, 
	\end{equation*}
	which models a high temperature at the equator and low at the pole. Overall, we solve three cases of our problem
	\begin{equation}
		\label{eqn:Cases}
		\begin{cases}
			D(u) = e^{-\beta u}, \\
			f(x,t,u,w) = u(1-u^2), \\
		\end{cases}
		\quad
		\begin{cases}
			D(u) = 1, \\
			f(x,t,u,w) = w(1-w), \\
			K(s) = G(s), \\
		\end{cases}
		\quad
		\begin{cases}
			D(u) = 1, \\
			f(x,t,u,w) = w(1-w), \\
			K(s) = K_\alpha(s), \\
		\end{cases}
	\end{equation} 
which test the effects of nonlinearity and nonlocality in several ways. The first example introduces pure nonlinear effects into the equation: both in the diffusivity and the source. The former has been chosen to satisfy positive definiteness while the latter to ensure three nontrivial steady states that are usually present in energy balance models. They describe (stable) cold, (unstable) intermediate, and (stable) warm climates (more information can be found for ex. in \cite{Fow11}). Next, two examples investigate the notion of history: short (Gaussian) and long memory (fractional integral) kernels. For the latter, we choose a representative value $\alpha = 0.5$ since the results for others do not present any significant qualitative differences.

First, to check the spectral accuracy of the spatial discretization, we fix the time step to be $h = 2\times 10^{-2}$ and compare solutions obtained for different $N$ with the one calculated for $N_{max} = 30$. The latter is treated as "exact". The time step $h$ can also be chosen smaller and in that case the error would decay even faster. The comparison is done at $t = 0.125$ to not let diffusive effects to force the solution to decay, which can happen for larger times. Results of our simulations are shown in Fig. \ref{fig:rzadX} where a semi-log plot of errors in presented. As we can see, the spectral accuracy is confirmed for all considered cases, that is the logarithm of the error behaves approximately linearly. The error saturates at machine epsilon (that is $\epsilon_M = 2.2\times 10^{-16}$) for $N \approx 11-13$ for all problems which means that using several degrees of freedom can grant the (numerically) perfect accuracy. We can also see that the convergence to zero is slightly faster for no or short memory examples. The error could be made even smaller if we decreased the time step, however, we wanted to clearly show the error trend. 

\begin{figure}
\centering
\includegraphics[scale = 0.8]{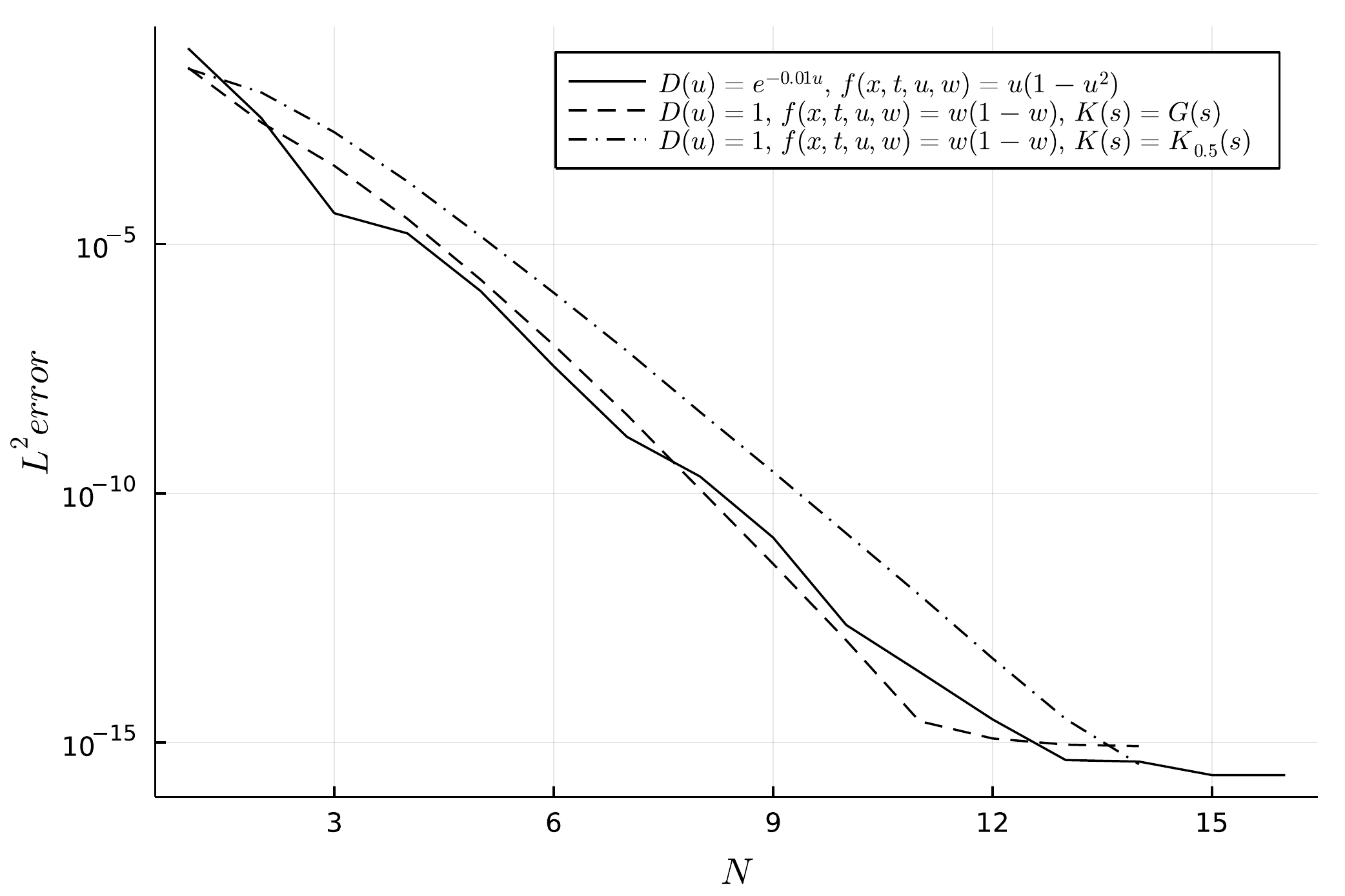}
\caption{Numerically calculated $L^2$ error between solutions with $N_{max} = 30$ for different $N$ and problems given in (\ref{eqn:Cases}). }
\label{fig:rzadX}
\end{figure}

The temporal order is calculated in a similar way. Here, we use $N = N_{max} = 20$. As we have seen in above computations, this number of degrees of freedom is sufficient to grant a complete spatial accuracy. Our ,,exact'' solution is then precalculated for $h_{min} = 2\times 10^{-3}$ and compared with calculations done with larger steps. The time of comparison is taken to be $t_0$. In Fig. \ref{fig:rzadT} we present the log-log plot of the $L^2$ error for different values of $h$. As we can see, for all considered cases the order of convergence is confirmed to be equal to $2$, that is the lines are parallel to $h^2$ for large values of $h^{-1}$ (small $h$). Note that the weakly singular kernel in the fractional integral (\ref{eqn:NonlocalOperatorFrac}) does not impair the accuracy. This is because we have chosen to integrate \emph{exactly} the kernel using the product integration rule. The only possible slight loss of accuracy is seen in the nonlinear diffusivity example. However, the graph still confirms the method's second order. 

\begin{figure}
\centering
\includegraphics[scale = 0.8]{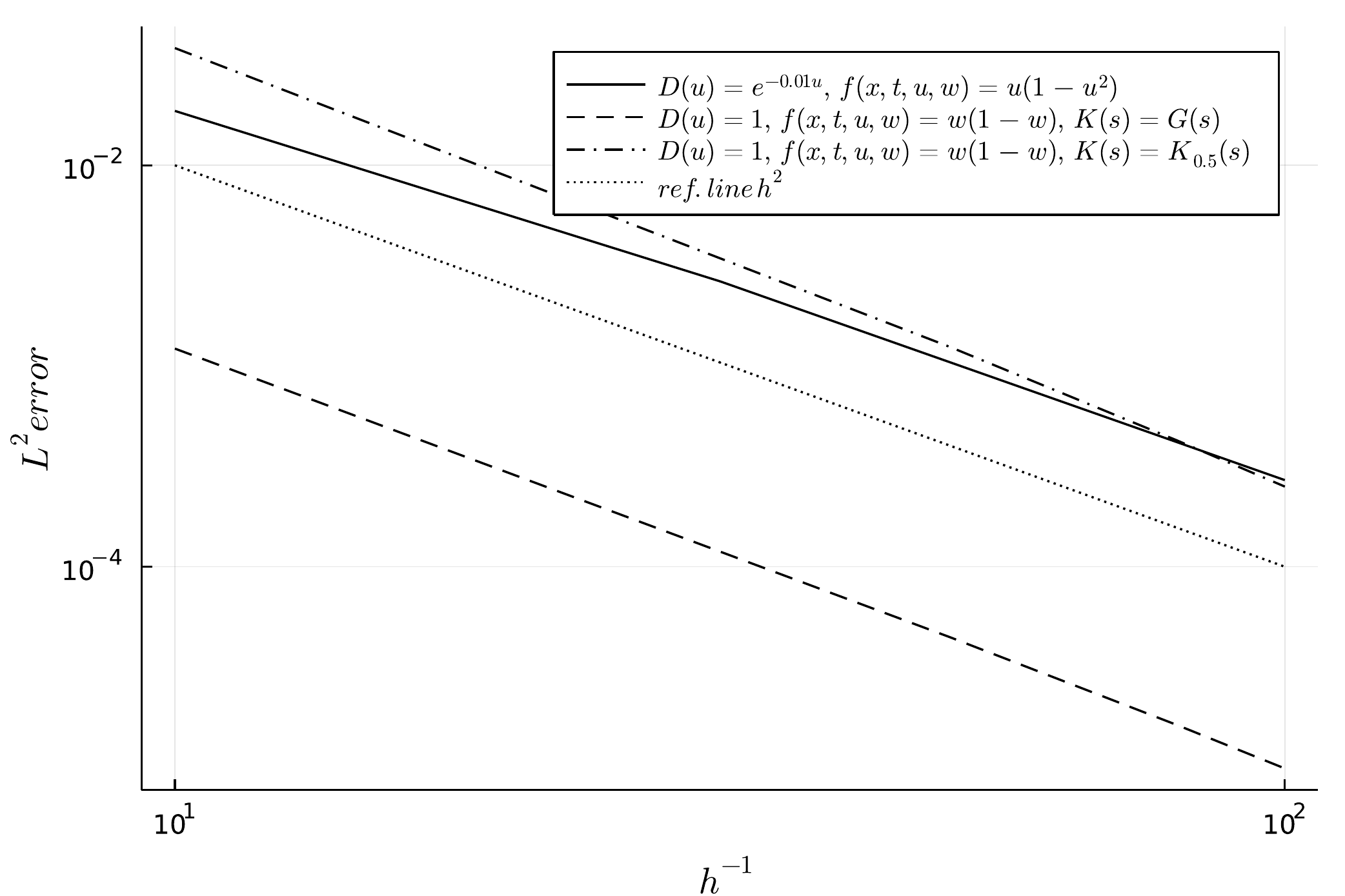}
\caption{Numerically calculated $L^2$ error with $h_{min} = 2\times 10^{-3}$ for different $h$ and problems given in (\ref{eqn:Cases}.}
\label{fig:rzadT}
\end{figure}

Lastly we evaluate the relative speed of our numerical method. This benchmark is based on comparison with the standard Finite Element Method with second-order interpolation in space and the method of lines adaptive implicit Runge-Kutta time integration. To provide the state-of-the-art computations, we have used the \textit{Wolfram Mathematica} symbolic computation environment. The specific numerical routine was then \textit{NDSolve} with appropriate options. To provide some objective, i.e., CPU independent data, we compute the ratio of the computation time for the FEM method and our spectral scheme required to obtain a fixed level of absolute error. That is, for a given tolerance, we calculate the following quantity
	\begin{equation}
		\label{eqn:CPUTime}
		\text{ratio} = \frac{\text{time for FEM}}{\text{time for spectral method}}
	\end{equation}		
	by choosing the minimal spatial degrees of freedom to obtain a given error. The problem that we solve is the following
	\begin{equation*}
		\begin{cases}
			D(u) = 1+u, \\
			f(x,t,u,w) = u(1+u), \\
		\end{cases}
		\quad u(x,0) = \cos(\pi x).
	\end{equation*}
Our results are presented in Tab. \ref{tab:CPUTime}. The superiority of using the spectral method is evident. As can be seen, it is $40-100$ times faster than the FEM of choice. This advantage comes from the fact that spectral methods are best suited for finding smooth solutions. Note that, if we were to expect less regular solutions, the FEM would be the method of choice. 

\begin{table}
\centering
\begin{tabular}{cccccccccc}
	\toprule
	absolute error level & $10^{-1}$ & $10^{-2}$ & $10^{-3}$ & $10^{-4}$ & $10^{-5}$ & $10^{-6}$ & $10^{-7}$ & $10^{-8}$ & $10^{-9}$ \\
	\midrule
	ratio & $133$ & $87$ & $85$ & $96$ & $112$ & $71$ & $53$ & $51$ & $40$ \\
	\bottomrule
\end{tabular}
\caption{The ratio (\ref{eqn:CPUTime}) of computation time for a given error level for two considered methods.}
\label{tab:CPUTime}
\end{table}

\section{Conclusion}
Motivated by the efficient use of the spectral method for solving equations of energy balance models, we have provided convergence proofs for Galerkin-Legendre scheme. Spectral approximation to the solution of the diffusive energy balance model has been known since its early days. However, throughout many years despite the successful use of the Legendre method, there has not been any rigorous analysis of convergence and stability. The model brings out a difficulty originating in degenerate diffusivity. Our treatment introduced some special weighted Sobolev space that made the convergence analysis feasible along with optimal estimates on the error. The use of spectral method is also beneficial when it comes to the treatment of nonlocal (memory) operators. Their numerical evaluation requires substantial computing power due to the history of the process. Since spectral methods have exponential accuracy it is then possible to reduce the number of degrees of freedom on the spatial part of the problem and use the remaining computational power to treat the memory effects on the temporal side.

Discretization in time using extrapolated coefficients grants a linear second-order method that can quickly compute the solution to the desired accuracy. Moreover, using a parallelized code fragment helped to improve the performance even further. Due to the nonlocal operator that requires to take into account the history of the evolution, the initial computational cost of the simulations can be high. Thanks to the spectral accuracy and multithread computations, we were able to reduce it. In future work, we will extend our method to some more general equations with nonlocal specific heat and diffusivity proportional to the gradient. That is to say, we plan to consider the following problem suggested, for example, in \cite{Dia97}
	\begin{equation*}
		c(x,T, JT) T_t = (d(T)(1-x^2)|T_x|^{p-2}T_x)_x + g(x, t, T, JT), \quad 1 < p < \infty, \quad x\in(0,1),
	\end{equation*}
with Neumann boundary condition and the usual choice of the initial condition and the memory operator defined in (\ref{eqn:NonlocalOperator}). Two difficulties arise here: one due to nonlinear and nonlocal heat capacity, and the other due to doubly degenerate diffusivity. The latter means that apart from degeneracy at $x = 1$ we also have to carefully deal with the situation when $T_x \approx 0$. Both of these generalizations may have a significant impact on the numerical analysis of the problem. 

It will also be both relevant and interesting to consider the free-boundary problem frequently found in conceptual climate models \cite{Dia20}. As we have mentioned in Section 2, following Budyko, one would like to introduce the concept of the ice line $x_i(t)$ separating the regions neighbouring with $T = T_i$, where $T_i$ is a prescribed temperature. The archetypal albedo could then be (\ref{eqn:AlbedoFreeBoundary}). An additional Stefan-like condition would then be the continuity of the flux at the free boundary $x = x_i(t)$. In that case, our spectral scheme could then be coupled with numerical approaches to solving free-boundary problems. Investigating both of these situations will contribute to our programme of a rigorous analysis of climatically relevant nonlinear and nonlocal models. 

\section*{Acknowledgement}
Ł.P. has been supported by the National Science Centre, Poland (NCN) under the grant Sonata Bis with a number NCN 2020/38/E/ST1/00153.

\bibliography{biblio2}
\bibliographystyle{plain}

\end{document}